\theoremstyle{plain}
\newtheorem{thm}{Theorem}[section]
\newtheorem{theorem}{Theorem}[section]
\newtheorem{lemma}[thm]{Lemma}
\newtheorem{corollary}[thm]{Corollary}
\newtheorem{proposition}[thm]{Proposition}
\theoremstyle{remark}
\numberwithin{equation}{section}
\newcommand{\E}{\mathbb{E}}
\newcommand{\M}{\mathbb{M}}
\newcommand{\N}{\mathbb{N}}
\newcommand{\bP}{\mathbb{P}}
\newcommand{\R}{\mathbb{R}}
\newcommand{\Z}{\mathbb{Z}}
\newcommand{\cB}{\mathcal{B}}
\newcommand{\bB}{\mathbf{B}}
\newcommand{\bD}{\mathbf{D}}
\newcommand{\cI}{\mathcal{I}}
\newcommand{\cL}{\mathscr{L}}
\newcommand{\cS}{\mathcal{S}}
\newcommand{\cE}{\mathcal{E}}
\newcommand{\cF}{\mathcal{F}}
\newcommand{\Q}{\mathbb{Q}}
\newcommand{\gam}{\gamma}
\newcommand{\del}{\delta}
\newcommand{\ep}{\varepsilon}
\newcommand{\lam}{\lambda}
\newcommand{\om}{\omega}
\newcommand{\Om}{\Omega}
\newcommand{\ol}{\overline}
\newcommand{\Div}{{\rm div}\,}
\newcommand{\tr}{{\rm tr}\,}
\newcommand{\epi}{{\rm epi}\,}
\author[W. Jing]{Wenjia Jing}%$^\ddag$}
\address{Department of Mathematics, 
	      The University of Chicago, 
	      5734 S. University Avenue Chicago, 
	      IL 60637, USA}
\email{wjing@math.uchicago.edu}
\author[P.E. Souganidis]{Panagiotis E. Souganidis}%$^{\sharp}$}
\address{Department of Mathematics, 
	      The University of Chicago, 
	      5734 S. University Avenue Chicago, 
	      IL 60637, USA}
\email{souganidis@math.uchicago.edu}
\author[H.V. Tran]{Hung V. Tran}%$^{\S}$}
\address{Department of Mathematics, 
	      University of Wisconsin Madison, 
	      Van Vleck Hall, 480 Lincoln Drive, 
	      WI 53706, USA}
\email{hung@math.wisc.edu}
\title[Homogenization of HJ equations in dynamic random environment]
{Stochastic homogenization of viscous superquadratic Hamilton-Jacobi equations in dynamic random environment}
\begin{document}

\begin{abstract}

We study the qualitative homogenization of second order  viscous Hamilton-Jacobi equations in space-time stationary ergodic random environments. Assuming that the Hamiltonian is convex and superquadratic in the momentum variable (gradient) we establish a homogenization result and characterize the effective Hamiltonian for arbitrary (possibly degenerate) elliptic diffusion matrices. The result extends previous work that required uniform ellipticity and space-time homogeneity for the diffusion.

\medskip

\noindent{\bf Keywords:}
  stochastic homogenization,
  Hamilton-Jacobi equations,
  viscosity solutions,
  dynamic random environment,
  time-dependent Hamiltonian,
  convex analysis

\medskip

\noindent{\bf AMS Classification:}
  35B27 % Homogenization; PDEs in media with periodic structure
  70H20 % Hamilton-Jacobi equations
  49L25 % Viscosity solutions		
\end{abstract}

\maketitle
%%%%%%%%%%%%%%%%%%%%%%%%%%%%%%%%%%%%%%%%%%%%%%%%%%%%%%%%%%%%%%%%%%%%%%%%%%

\section{Introduction}\label{sec:Intro}

We study the homogenized behavior of the solution $u^\ep = u^\ep(x,t,\om)$ to the second order (viscous) Hamilton-Jacobi equation
\begin{equation}\label{PDE1}
\left\{
\begin{aligned}
&u^\ep_t - \ep \tr \left(A\left(\frac{x}{\ep},\frac{t}{\ep}, \om\right) D^2 u^\ep \right) + H\left(Du^\ep, \frac{x}{\ep},\frac{t}{\ep},\omega\right)=0 &\quad &\text{in} \ \R^n \times (0,+\infty),\\
&u^\ep =u_0 &\quad &\text{on} \ \R^n \times \{0\}, 
\end{aligned}
\right.
\end{equation}
where $u_0 \in {\rm BUC}(\R^n)$, the space of bounded uniformly continuous functions in $\R^n$, and, for each element  $\om$ of the underlying probability space $(\Om, \cF, \bP)$, the diffusion matrix $A = (a_{ij}(x,t,\omega))$ is elliptic, possibly degenerate, and, for all $x,t$ and $\om$, the Hamiltonian $H=H(p,x,t,\om)$ is convex and has superquadratic growth in $p$. Moreover, $A(\cdot,\cdot,\om)$ and $H(p,\cdot,\cdot,\om)$ are stationary ergodic random fields on $(\Om,\cF,\bP)$. The precise assumptions are detailed in Section \ref{sec:prelim}.

The standard viscosity solution theory yields that, for each $\om \in \Om$, \eqref{PDE1} is well posed. The homogenization result is that there exists an effective Hamiltonian $\ol{H}: \R^n \to \R$ such that, if $\ol{u}$ is the unique solution to the homogenized Hamilton-Jacobi equation
\begin{equation}\label{PDE-hom}
\left\{
\begin{aligned}
&  \ol{u}_t + \overline H(D\overline u) = 0 &\quad &\text{in} \ \R^n \times (0,\infty),\\[1mm]
&\overline u = u_0  &\quad &\text{on} \ \R^n \times \{0\},
\end{aligned}
\right.
\end{equation}
then, for almost every $\om \in \Om$, the solution $u^\ep$ to \eqref{PDE1} converges locally uniformly to $\ol{u}$, that is there exists an event $\widetilde \Om \in \cF$ with full measure such that, for every $\om \in \widetilde\Om$, $R>0$ and $T > 0$,
\begin{equation}
\label{eq:unifcov}
\lim_{\ep \to 0} \sup_{|x| \le R, t \in [0,T]} \, \left| u^\ep(x,t,\om) - \ol{u}(x,t) \right| = 0.
\end{equation}

The ``viscous'' Hamilton-Jacobi equation \eqref{PDE1} arises naturally in the study of large deviations of diffusion process in spatiotemporal random media, in which case $H$ is quadratic in the gradient. It also finds applications in stochastic optimal control theory; we refer to Fleming and Soner \cite{FS:06} for more details. The homogenization result above serves as a model reduction in the setting when the environment is highly oscillatory but, nevertheless, satisfies certain self-averaging properties. In particular, when the diffusion matrix in the underlying stochastic differential equation depends on time, the coefficient $A$ in \eqref{PDE1} will be time dependent as well. As far as we know and as argued below, the homogenization problem in this setting has been open.

The periodic homogenization of coercive Hamilton-Jacobi equations was first studied by Lions, Papanicolaou and Varadhan \cite{LPV} and, later, Evans \cite{Ev:89,Ev:92} and Majda and Souganidis \cite{MS:94}. Ishii proved in \cite{I:00} homogenization in almost periodic settings. The stochastic homogenization of first order Hamilton-Jacobi equations was established independently by Souganidis \cite{S} and Rezakhanlou and Tarver \cite{RT}. Later  Lions and Souganidis \cite{LS2} and Kosygina, Rezakhandou and Varadhan \cite{KRV} proved independently stochastic homogenization for viscous Hamilton-Jacobi equations using different methods and complementary assumptions. In \cite{LS3} Lions and Souganidis gave a simpler proof for homogenization in probability using weak convergence techniques. This program was extended by Armstrong and Souganidis in \cite{AS1,AS2} where the metric-problem approach was introduced. Some of the results of \cite{AS1,AS2} were revisited by Armstrong and Tran in \cite{AT1}. All of the aforementioned works in random homogenization required the Hamiltonian $H$ to be convex. The homogenization of general nonconvex Hamiltonians in random environments remains to date an open problem. A first extension to level-set convex Hamiltonians was shown  by Armstrong and Souganidis in \cite{AS2}. Later, Armstrong, Tran and Yu \cite{ATY1,ATY2} proved stochastic homogenization for separated Hamiltonians of the form $H=h(p)-V(x,\om)$ with general non-convex $h$ and random potential $V(x,\om)$ in one dimension. Their methods also established homogenization of some special non-convex Hamiltonians in all dimensions. Armstrong and Cardaliaguet \cite{AC} studied the homogenization of positively homogeneous non convex Hamilton-Jacobi equations in strongly mixing environments. More recently Feldman and Souganidis \cite{FS:162} established  homogenization of strictly star-shaped Hamiltonians in similar environments.  Ziliotto \cite{Zil:16} constructed an example of a non-convex separated Hamiltonian in two dimensions that does not homogenize. 
Feldman and Souganidis \cite{FS:16} extended the construction to any separated $H$ that has a strict saddle point. In addition  \cite{FS:16} also yields non-convex Hamiltonians with space-time random potentials for which the Hamilton-Jacobi equation does not homogenize.

The aforementioned PDE approaches for stochastic homogenization, that is the weak convergence technique and the metric problem approach, were developed for random environments that are time independent. In this setting, one has uniform in $\ep$ and $\om$ Lipschitz estimates for $u^\ep(\cdot,\om)$, which, however, are not available if $A$ and $H$ depend on $t$. Nevertheless, Kosygina and Varadhan \cite{KV} established homogenization results for viscous Hamilton-Jacobi equations with constant diffusion coefficients, more precisely $A$ being the identity matrix, using the stochastic control formula and invariant measures. For first order equations with superlinear  Hamiltonians, homogenization results were proved by Schwab \cite{Sch}. Recently, the authors \cite{JST} established homogenization for linearly growing Hamiltonians that are periodic in space and stationary ergodic in time.%All of these works rely on optimal control formulas. 

In this paper, we extend and combine the methodologies of \cite{LS2} and \cite{AS1,AS2,AT1} and obtain stochastic homogenization for general viscous Hamilton-Jacobi equations in dynamic random environments. The results of \cite{LS2} was based on the analysis of a special solution to \eqref{PDE1} that we loosely call the fundamental solution. This is a sub-additive, stationary function which, in view of the subadditive ergodic theorem, has a homogenized limit, that identifies the convex conjugate of the effective Hamiltonian $\ol{H}$. At the $\ep >0$ level, however, the fundamental solution gives rise only to supersolutions $v^\ep$ to \eqref{PDE-hom}. One of the key steps in \cite{LS2} was to show that the difference between $u^\ep$ and $v^\ep$ tends to $0$ as $\ep \to 0$. This made very strong use of the uniform Lipschitz estimates on $u^\ep$ which were also  proved there and are not available for time dependent problems. The methodology of \cite{AS1,AS2} was based on the analysis of the solution to the metric problem which loosely speaking is the ``minimal cost'' to connect two points. The metric solution  is a sub-additive stationary function and has a homogenized limit, which, at each level, is the support function of the level set of the effective $\ol{H}$. The homogenization result was then proved in \cite{AS1,AS2} by developing a reversed perturbed test function argument. In the dynamic random setting, however, the ``metric" between two  points in space  must depend on a starting time and, hence is not suitable for such environments.

Here we use the  fundamental solution  approach of \cite{LS2} to find the effective Hamiltonian and the reverse perturbed test function method of \cite{AS1,AS2} to establish the homogenization. The main contribution of the paper  is to ``go away'' from the need to have uniform in $\ep$ Lipshitz bounds. Indeed  the uniform convergence of the fundamental solution to its homogenized limit uses only a uniform (in $\ep$) modulus of continuity in its first pair of variables, which is available for superquadratic Hamilton-Jacobi equations \cite{CC,CS:12}. Similarly the reverse perturbed test function argument is adapted to work without the need of Lipschitz bounds.

%Secondly, we are able to replace the last argument of \cite{LS2}, which requires uniform Lipschitz estimates, by a perturbed test function argument in reverse that is similar to \cite{AS1,AS2} but adapted to the dynamic setting. 

We summarize next the main results of this paper. For each fixed $\om \in \Om$, let $\cL(x,t,y,s,\om)$ denote the fundamental solution of \eqref{PDE1}; see \eqref{eq:fundsol} below. The first result  is that $\cL(x,t,y,s,\om)$ has long time average, that is there exists a convex function $\ol{L} : \R^n \to \R$, known as  the effective Lagrangian, such that, for a.s. $\omega \in \Omega$ and locally uniformly in $(x,t)$ for $t>0$, 
\begin{equation}
\label{eq:homogL}
\lim_{\rho \to \infty} \frac{1}{\rho} \cL(\rho x, \rho t, 0,0,\omega) = t\,\ol{L}\left(\frac{x}{t}\right).
\end{equation}
We note that although the pointwise convergence for fixed $(x,t)$ is a direct consequence of subadditive ergodic theorem, the locally uniform convergence requires some uniform (in $\om$ and $\rho$) continuity of the scaled function $\rho^{-1} \cL(\rho\,\cdot, \rho\,\cdot, 0,0,\om)$. This is where the superquadratic growth of $H$ is used. Indeed, under this assumption, Cannarsa and Cardaliaguet \cite{CC}, and Cardaliaguet and Silvestre \cite{CS:12} obtained space-time $C^{0,\alpha}$-estimates for bounded solutions, %to the superquadratic Hamilton-Jacobi equations, 
which depend on the growth condition of $H$ but neither  on the ellipticity of diffusion matrix $A$ nor on the smoothness of $H$ or $A$. Here we obtain the desired continuity by applying these regularity results to the scaled fundamental solutions.

The effective Hamiltonian is then defined by
\begin{equation}
\label{e.Hbar}
\ol{H}(p) = \sup_{v \in \R^n} \left ( p\cdot v - \ol{L}(v) \right ),
\end{equation}
and the homogenized equation is precisely \eqref{PDE-hom}.  Then we show that $\ol{H}$ is also the limit of the solutions to the approximate cell problem, a fact which yields the homogenization for the general equation.

%follow the idea of \cite{AS1,AS2} to prove \eqref{eq:unifcov} by establishing that $\ol{H}(p)$ agrees with the uniform limit of the solution to the approximate cell problem; see \eqref{e.acell} below. For this we adapt the perturbed test function argument in reverse developed in \cite{AS1,AS2} to the dynamic setting. This is possible because the perturbed test function argument in reverse of \cite{AS1,AS2}, in the setting of convex Hamiltonians, does not require uniform Lipschitz estimates.

The rest of the paper is organized as follows. In the remaining part of the introduction we review most of the standard notation used in the paper. In the next section, we introduce the precise assumptions and state the main results. In section \ref{sec.L} we prove \eqref{eq:homogL}. In section \ref{s.lim=H}, we show that the effective $\ol{H}$ defined in \eqref{e.Hbar} agrees with the uniform limit of the solution to the approximate cell problem. The homogenization result for the Cauchy problem \eqref{PDE1} follows from this fact. In section \ref{sec:formulae} we show that, as a consequence of the homogenization result proved in this paper, the effective Hamiltonian is given by formulae similar to the ones established in \cite{LS2} and \cite{KV}. 
%a generalized formula for the effective Hamiltonian due to Kosygina and Varadhan \cite{KV} holds for the setting of this paper and, moreover, agrees with \eqref{e.Hbar}.

\subsection*{Notations}
We work in the $n$-dimensional Euclidean space $\R^n$. The subset of points with rational coordinates is denoted by $\Q^n$. The open ball in $\R^n$ centered at $x$ with radius $r > 0$ is denoted by $B_r(x)$, and this notation is further simplified to $B_r$ if the center is the origin. The volume of a measurable set $A \subseteq \R^n$ is denoted by $\mathrm{Vol}(A)$. The $n+1$ dimensional space-time is denoted by $\R^n \times \R$ or simply by $\R^{n+1}$. The space time cylinder of horizontal radius $R>0$ and vertical interval $(r,\rho)$ centered at a space-time point $(x,t)$ is denoted by $Q_{R,r,\rho}(x,t)$, that is  $Q_{R,r,\rho}(x,t) = \{(y,s) \,:\, y \in B_R(x), s \in (t+r,t+\rho)\}$; to further simplify notations, we omit the reference point $(x,t)$ when it is $(0,0)$. Moreover, $Q_R$ is a short-hand notation for the cylinder $Q_{R,-R,R}$. For two vectors $u, v \in \R^n$, $\langle u, v\rangle$ denotes the inner product between $u$ and $v$, and  $\M^{n\times m}$ denotes the set of $n$ by $m$ matrices with real entries, and $\M^n$ is a short-hand notation of $\M^{n \times n}$. The identity matrix is denoted by $Id$. Finally, $\mathcal B(\Xi)$ denotes the Borel $\sigma$-algebra of the metric space $\Xi$. 

%%%%%%%%%%%%%%%%%%%%%%%%%%%%%%%%%%%%%%%%%%%%%%%%%%%%

%%%%%%%%%%%%%%%%%%%%%%%%%%%%%%%%%%%%%%%%%%%%%%%%%%%%%%%%%%%%%%%%
\section{Assumptions, the fundamental solution, and the main results}
\label{sec:prelim}

\subsection{The general setting and assumptions}

We consider a probability space $(\Omega, \cF, \bP)$ endowed with an {ergodic group of measure preserving transformations} $\{\tau_{(x,t)} \,:\, (x,t)\in\R^{n+1}\}$, that is, a family of maps $\tau_{(x,t)}:\Omega\to\Omega$ satisfying, for all $(x,t), (x',t')\in \R^{n+1}$ and all $E \in\cF$,
\begin{enumerate}
\item[(P1)] $\tau_{(x+x',t+t')}=\tau_{(x,t)}\circ\tau_{(x',t')} \, \hbox{ and }\;
	\bP[\tau_{(x,t)} E] = \bP[E]$, 
\end{enumerate}
and 
\begin{enumerate}
\item[(P2)] $\tau_{(y,s)}(E)=E \ \text{for all} \  (y,s) \in\R^{n+1}
	 \hbox{ implies }\bP[E] \in \{0,1\}$.
\end{enumerate}

The diffusion matrix $A = (a_{ij}(x,t,\omega)) \in \M^n$ is given by $$A = \sigma \sigma^T$$ where $\sigma = \sigma(x,t,\omega)$ is  an $M^{n\times m}$ 
valued random process. 

As far as $H:\R^n \times \R^n \times \R \times\Omega\to\R$ and $\sigma: \R^n\times \R \times \Om \to \M^{n\times m}$ are concerned, we assume henceforth that 
\begin{enumerate}
\item[(A1)]  $H$ and $\sigma$ are $\cB(\R^{n}\times\R^n \times \R)\times\cF$ and $\cB(\R^n \times \R) \times \cF$ measurable respectively,
 \item[(A2)] for any fixed $p \in \R^d$, $\sigma$ and $H$ are stationary in $x$ and $t$, 
that is, for every $(x,t)\in \R^{n+1}$, $(y,s)\in \R^{n+1}$ and $\omega\in\Omega$, 
\begin{equation*}
\sigma(x+y,t+s,\omega) = \sigma(x,t,\tau_{(y,s)}\omega) \ \text{and} \ H(p, x+y,t+s,\omega) = H(p,x,t,\tau_{(y,s)}\omega),
\end{equation*}
\item[(A3)] for each $p \in \R^n$ and $\om\in \Om$, $\sigma(\cdot,\cdot,\omega)$ and $H(p,\cdot,\cdot,\om)$ are Lipschitz continuous in $x$ and $t$,
 \item[(A4)] there exists $\gam > 2$ and $C \ge 1$ such that, for all $(x,t) \in \R^{n+1}, \omega\in\Omega$ and  $p \in \R^n$,
\begin{equation}\label{A2}
\frac{1}{C} |p|^\gam - C \le H(p,x,t,\om) \le C(|p|^\gam + 1),
\end{equation}
\end{enumerate}
and, finally,
\begin{enumerate}
\item[(A5)] the mapping $p\mapsto H(p,x,t,\omega)$ is convex for all $(x,t,\om) \in \R^{n+1} \times \Omega$.
\end{enumerate}

Since throughout the paper we use all the above assumptions, we summarize them as
\begin{enumerate}
\item[(A)] assumptions (P1), (P2), (A1). (A2), (A3), (A4) and (A5) hold.
\end{enumerate}

\subsection{The fundamental solution} For each $\om \in \Om$ and $(y,s) \in \R^n \times \R$, we define the fundamental solution  $\cL := \cL(\cdot,\cdot,y,s,\om) : \R^n \times (s,\infty) \to \R$ to be the unique viscosity solution to
%that is loosely called the fundamental solution. It is the unique viscosity solution to
\begin{equation}
\label{eq:fundsol}
\left\{
\begin{aligned}
&\cL_t - \tr(A(\cdot,\cdot,\om)D^2 \cL) + H(D \cL,\cdot,\cdot,\om) = 0 &\quad &\mbox{in} \ \R^n \times (s,\infty),\\
&\cL(\cdot,s,y,s,\om) = \delta(\cdot,y) &\quad &\mbox{in} \ \R^n,
\end{aligned}
\right.
\end{equation}
where 
$ \delta(x,y)=0$ if $x=y$ and $ \delta(x,y)=\infty$ in $\R^n \setminus \{y\}.$
% whose value is zero at $y$ and $\infty$ on $\R^n \setminus \{y\}$. 
As in Crandall, Lions and Souganidis \cite{CLS:89}, this initial condition is understood in the sense that $\cL(\cdot,t,y,s,\om)$ converges, as $t$ decreases to $s$, locally uniformly on $\R^n$ to the function $\delta(\cdot,y)$. The existence and uniqueness of $\cL$ follows from an almost straight forward modification of the results of \cite{CLS:89}. In view of the stochastic control representation of Hamilton-Jacobi equations, $\cL(x,t,y,s,\om)$ is the ``minimal cost'' for a controlled diffusion process in the random environment determined by $(\sigma,H)$ to reach the vertex $(y,s)$ from $(x,t)$.  

%%%%%%%%%%%%%%%%%%%%%%%%%%%%%%%%%%%%%%%%%%%%%%%%%%%%%%%%%%%%%%%%
\subsection{Main theorems}
The first result is about the long time behavior of the fundamental solution which yields the effective Lagrangian $\ol{L}$. The proof  is given  at the end of Section \ref{sec.L}.

\begin{theorem}\label{t.L.homog} 
Assume {\upshape(A)}. There exist $\widetilde\Om \in \cF$ with $\bP(\widetilde\Om) = 1$ and a convex function $\ol{L} : \R^n \to \R$ such that, for all $\om \in \widetilde\Om$, $r>0$ and $R>r$,
\begin{equation}\label{e.L.homog}
\lim_{\rho \to \infty} \sup_{(y,s)\in Q_R} \, \sup_{(x,t) \in Q_{R,r,R}((y,s))} \left\lvert \frac{1}{\rho} \cL(\rho x, \rho t, \rho y, \rho s, \om) - (t-s) \ol{L}\left(\frac{x-y}{t-s}\right) \right\rvert  = 0.
\end{equation}
\end{theorem}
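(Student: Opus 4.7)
The plan is to combine subadditivity and stationarity of $\cL$ (yielding pointwise almost-sure convergence via a subadditive ergodic theorem) with a uniform-in-$\rho$ equicontinuity estimate on the rescaled family $\cL^\rho(x,t,y,s,\om) := \rho^{-1}\cL(\rho x, \rho t, \rho y, \rho s, \om)$, restricted to the region $\{t - s \ge r > 0\}$. The superquadratic growth (A4) enters in an essential way at the equicontinuity step, taking the place of the uniform Lipschitz bound on $u^\ep$ used in the static setting of \cite{LS2}.

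To start, I establish three structural properties of $\cL$. A comparison argument --- using that $\cL(\cdot, t, z, r, \om) \to \delta(\cdot, z)$ locally uniformly as $t \downarrow r$, so $(x,t) \mapsto \cL(x,t,z,r,\om) + \cL(z,r,y,s,\om)$ is an admissible supersolution of the equation solved by $\cL(\cdot,\cdot,y,s,\om)$ --- produces the subadditivity
\[
\cL(x,t,y,s,\om) \,\le\, \cL(x,t,z,r,\om) + \cL(z,r,y,s,\om), \qquad s < r < t.
\]
Assumption (A2) and uniqueness of viscosity solutions give the stationarity identity $\cL(x+a, t+b, y+a, s+b, \om) = \cL(x, t, y, s, \tau_{(a,b)}\om)$. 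Finally, comparison with self-similar sub- and supersolutions of the model equation $u_t + C^{-1}|Du|^\gam = C$ yields two-sided bounds
\[
\tfrac{1}{C'}\tfrac{|x-y|^{\gam/(\gam-1)}}{(t-s)^{1/(\gam-1)}} - C'(t-s) \,\le\, \cL(x,t,y,s,\om) \,\le\, C'\tfrac{|x-y|^{\gam/(\gam-1)}}{(t-s)^{1/(\gam-1)}} + C'(t-s),
\]
which in particular make $\{\cL^\rho\}_\rho$ uniformly locally bounded on each slab $\{t-s \ge r\}$. With these ingredients, the Akcoglu-Krengel multiparameter subadditive ergodic theorem, applied in the style of \cite{LS2, AS1, AT1}, produces a shift-invariant full-measure event $\widetilde\Om$ and a deterministic, convex, positively $1$-homogeneous function $\Phi : \R^n \times (0, \infty) \to \R$ such that, for every $\om \in \widetilde\Om$ and every rational $(v, \tau)$ with $\tau > 0$, $\rho^{-1}\cL(\rho v, \rho \tau, 0, 0, \om) \to \Phi(v, \tau)$; I then set $\ol L(w) := \Phi(w, 1)$, so $\Phi(v, \tau) = \tau \ol L(v/\tau)$.

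The main step is to upgrade this pointwise convergence to the locally uniform convergence stated in \eqref{e.L.homog}. For this I invoke the interior $C^{0,\alpha}$ estimates of Cannarsa-Cardaliaguet \cite{CC} and Cardaliaguet-Silvestre \cite{CS:12}: bounded viscosity solutions of equations of the type in \eqref{PDE1} with superquadratic $H$ satisfy an interior H\"older estimate whose modulus depends only on $\gam$ and on $\|u\|_{L^\infty}$, and --- crucially --- neither on the ellipticity of $A$ nor on any smoothness of $A$ or $H$. Applied to $\cL^\rho(\cdot, \cdot, y, s, \om)$, which solves a rescaled equation of the same structural type with uniformly (in $\rho, \om, (y,s)$) controlled coefficients, this produces a modulus of continuity in $(x, t)$ on $\{t - s \ge r\}$ uniform in $\rho, \om, (y, s)$; the corresponding modulus in the base point $(y, s)$ is obtained by a symmetric argument exploiting the two-sided dynamic programming structure of $\cL$. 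Arzel\`a-Ascoli, combined with the stationarity identity and shift-invariance of $\widetilde\Om$, then forces every subsequential limit of $\{\cL^\rho\}_\rho$ to agree on a dense set of rational base points and displacements with the deterministic continuous function $(x, t, y, s) \mapsto (t-s)\ol L((x-y)/(t-s))$, identifying the full local uniform limit on $\{t - s \ge r\}$ and yielding \eqref{e.L.homog}. I anticipate the equicontinuity step to be the principal technical hurdle: after rescaling the diffusion $A(\rho \cdot, \rho \cdot, \om)$ retains no useful ellipticity, so bypassing this requires the ellipticity-free regularity theory of \cite{CC, CS:12}, available precisely thanks to the superquadratic assumption (A4).
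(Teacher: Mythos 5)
The overall architecture of your proposal (subadditivity, stationarity, growth bounds, subadditive ergodic theorem, then upgrading to locally uniform convergence via the ellipticity-free H\"older estimates of \cite{CC,CS:12}) matches the paper. But there is a genuine gap, and it sits exactly where the paper locates the main difficulty. You assert that a uniform modulus of continuity of $\cL^\rho$ in the vertex $(y,s)$ ``is obtained by a symmetric argument exploiting the two-sided dynamic programming structure of $\cL$.'' No such symmetric argument is available: $\cL(\cdot,\cdot,y,s,\om)$ solves a forward parabolic problem in its first pair of variables, and the interior estimates of \cite{CC,CS:12} apply only to those variables; as a function of the vertex, $\cL$ is not a solution of an equation to which these estimates apply, and the paper explicitly notes that uniform continuity in $(y,s)$ is ``more subtle and unknown up to now.'' The dynamic programming inequality \eqref{mono-t} is one-sided in time: it bounds $\cL(\cdot,\cdot,y,s,\om)$ from above by relocating the vertex to a later time and from below by relocating to an earlier time, but it does not yield a two-sided modulus in $(y,s)$.

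This gap propagates to two places in your argument. First, the claim that the subadditive limit is deterministic: the semigroup $\theta_c=\tau_{(cx,c)}$ along the line $\{(tx,t)\}$ need not be ergodic, and in \cite{LS2,Sch} determinism is deduced from continuity of $\cL^\ep$ in \emph{all} variables, which is unavailable here; the paper instead proves translation invariance of $\ol{L}(x,\cdot)$ by the one-sided vertex-shift trick (comparing with vertices $(k_1x,k_1)$, $k_1\in[s,s+1)$ for the upper bound and $k_2\in(s-1,s]$ for the lower bound). Second, your Arzel\`a--Ascoli step needs equicontinuity in $(y,s)$ to pass from a dense set of base points to the full supremum over $(y,s)\in Q_R$. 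The paper's replacement is an Egoroff plus Birkhoff ergodic theorem argument: for each $(y,s)$ one finds a nearby ``good'' vertex $(\hat y,\hat s)$ in the forward (resp.\ backward) cone for which the convergence is already uniform, and the error of changing vertices is controlled using only the one-sided subadditivity, the bound \eqref{eq:Lbdd}, and the H\"older continuity in the first pair of variables. Without this device, or an honest proof of vertex-equicontinuity, your argument does not close.
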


%The proof of Theorem \ref{t.L.homog} is given  at the end of Section \ref{sec.L}. 

%The fact that the long time behavior of the fundamental solution homogenizes is expected and follows, for any fixed vertex $(y,s)$, from the subadditive ergodic theorem. However, to establish the uniform convergence result above is nontrivial.

%The effective Hamiltonian $\ol{H}$ is defined  \eqref{e.Hbar}  and is, hence,  the Legendre transform of the effective Lagrangian $\ol{L}$. 

Let $\overline u$ be the solution to \eqref{PDE-hom}, with the effective Hamiltonian $\ol{H}$ is defined  \eqref{e.Hbar}  and is, hence,  the Legendre transform of the effective Lagrangian $\ol{L}$.

The homogenization result is stated next.

\begin{theorem}\label{thm:stoch}
Assume {\upshape(A)} and let $\widetilde\Omega$ be as in Theorem \ref{t.L.homog}. For each $\omega\in\widetilde\Omega$, the solution $u^\ep$ of \eqref{PDE1} converges, as $\ep \to 0$ and locally uniformly in $\R^n \times [0,\infty)$, to $\overline u$.
\end{theorem}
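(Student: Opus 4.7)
I would use the method of half-relaxed limits. A comparison argument with the barriers $\|u_0\|_\infty\pm Ct$ (where $C$ dominates $H(0,\cdot,\cdot,\om)$) gives a uniform-in-$\ep$ bound on $u^\ep(\cdot,\cdot,\om)$ on any compact time interval, so the upper and lower relaxed limits
\begin{equation*}
\overline{U}(x,t,\om) := \limsup_{\substack{\ep\to 0 \\ (y,s)\to(x,t)}} u^\ep(y,s,\om),
\qquad
\underline{U}(x,t,\om) := \liminf_{\substack{\ep\to 0 \\ (y,s)\to(x,t)}} u^\ep(y,s,\om)
\end{equation*}
are finite and respectively upper and lower semicontinuous. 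The plan is to show that, for each $\om\in\widetilde\Om$, $\overline U(\cdot,\cdot,\om)$ is a viscosity subsolution and $\underline U(\cdot,\cdot,\om)$ a viscosity supersolution of \eqref{PDE-hom}, both matching the initial datum $u_0$. Since $\overline H$ is convex (as a Legendre transform) and coercive (inherited from the superquadratic growth of $H$), the standard comparison principle for \eqref{PDE-hom} then forces $\overline U=\underline U=\overline u$, which is precisely the uniform convergence \eqref{eq:unifcov}.

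\textbf{Bridging to $\overline H$ via the approximate cell problem.} The link between Theorem \ref{t.L.homog} and the perturbed test function machinery is an approximate cell problem: for each $p\in\R^n$ and $\lam>0$ let $v^\lam=v^\lam(x,t,p,\om)$ denote the unique bounded space-time stationary viscosity solution to
\begin{equation*}
\lam v^\lam + v^\lam_t - \tr\bigl(A(\cdot,\cdot,\om)D^2 v^\lam\bigr) + H\bigl(p+Dv^\lam,\cdot,\cdot,\om\bigr) = 0 \quad \text{in } \R^n\times\R.
\end{equation*}
The central intermediate statement (content of section \ref{s.lim=H}) is that almost surely and locally uniformly $-\lam v^\lam(x,t,p,\om) \to \overline H(p)$ as $\lam\to 0$. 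To prove this I would exploit the inf-convolution
\begin{equation*}
m^\ep(x,t;p,\om) := \inf_{y\in\R^n}\bigl\{\,p\cdot y + \ep\,\cL\bigl(x/\ep,t/\ep,y/\ep,0,\om\bigr)\bigr\},
\end{equation*}
which, by the Hopf-Lax-type comparison identity, is the unique viscosity solution of the $\ep$-scaled equation in \eqref{PDE1} with initial datum $p\cdot x$. Theorem \ref{t.L.homog} together with Legendre duality $\overline H=\overline L^*$ gives $m^\ep(x,t;p,\om)\to p\cdot x - t\,\overline H(p)$ locally uniformly for $t>0$. A resolvent identification then transfers this small-$\ep$ limit into the claimed convergence of $-\lam v^\lam$.

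\textbf{Reversed perturbed test function argument.} With the cell-problem limit in hand, I would run the perturbed test function method of \cite{AS1,AS2}. To verify the subsolution property of $\overline U$ at an interior point $(x_0,t_0)$, suppose $\phi\in C^2$ is such that $\overline U-\phi$ has a strict local maximum at $(x_0,t_0)$, and, for contradiction, that $\theta := \phi_t(x_0,t_0)+\overline H\bigl(D\phi(x_0,t_0)\bigr) > 0$. With $p=D\phi(x_0,t_0)$, set the perturbed test function
\begin{equation*}
\phi^\ep(x,t) := \phi(x,t) + \ep\, v^\ep\bigl(x/\ep, t/\ep, p, \om\bigr).
\end{equation*}
A direct computation using the cell-problem equation satisfied by $v^\ep$ (with $\lam=\ep$), the bound $|D^2\phi|\le C$ near $(x_0,t_0)$, and the identity $-\ep v^\ep\to \overline H(p)$, yields
\begin{equation*}
\phi^\ep_t - \ep\tr\bigl(A(\cdot/\ep,\cdot/\ep,\om)D^2\phi^\ep\bigr) + H\bigl(D\phi^\ep,\cdot/\ep,\cdot/\ep,\om\bigr) \ge \tfrac{1}{2}\theta > 0
\end{equation*}
on a small parabolic cylinder centered at $(x_0,t_0)$. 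Since $\ep v^\ep$ is uniformly close to the \emph{constant} $-\overline H(p)$, the function $u^\ep-\phi^\ep$ still admits an interior maximum inside this cylinder, and the comparison principle produces a contradiction. The supersolution property of $\underline U$ is symmetric, and the initial-value matching $\overline U(\cdot,0,\om)=\underline U(\cdot,0,\om)=u_0$ follows from standard barriers constructed from the modulus of continuity of $u_0$.

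\textbf{Main obstacle.} The principal difficulty is the absence of uniform-in-$\ep$ Lipschitz bounds on $u^\ep$ and on $v^\ep$, which underpinned the arguments in the time-independent setting \cite{LS2,AS1,AS2}; here they fail because $A$ and $H$ depend on $t/\ep$. The remedy is the space-time $C^{0,\al}$ estimate of \cite{CC,CS:12}, valid for superquadratic viscous Hamilton-Jacobi equations independently of the ellipticity of $A$ or the smoothness of $A,H$. These estimates, already deployed in the proof of Theorem \ref{t.L.homog}, provide a uniform modulus of continuity for $u^\ep$ and $\phi^\ep$, which is exactly what is needed for the interior maximum in the comparison step to be attained in a compact cylinder, thereby validating the reversed perturbed test function argument without any Lipschitz input.
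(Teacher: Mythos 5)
Your outer architecture (half-relaxed limits, barriers from (A4), perturbed test function $\phi^\ep=\phi+\ep v^\ep(\cdot/\ep,\cdot/\ep,p)$, comparison for the convex coercive limit equation) is exactly the standard reduction the paper itself invokes and defers to \cite[Section 7.3]{AS1}: Theorem \ref{thm:stoch} follows once one knows that the discounted correctors satisfy $\ep w^\ep\to-\ol{H}(p)$ uniformly on cylinders of radius $\sim 1/\ep$, i.e.\ Theorem \ref{thm:appcell}. So the part of your proposal that is genuinely load-bearing is the ``bridging'' paragraph, and that is where you diverge from the paper and where there is a real gap.

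You propose to obtain $-\lam v^\lam\to\ol{H}(p)$ from Theorem \ref{t.L.homog} via the inf-convolution $m^\ep(x,t;p)=\inf_y\{p\cdot y+\cL^\ep(x,t,y,0)\}$ followed by ``a resolvent identification.'' The first half is fine (one must restrict the infimum to a compact set in $y$ using Lemma \ref{l.L.bdd}, but that is routine), and it yields homogenization of the Cauchy problem with affine data. The second half is the entire difficulty and cannot be waved through. In a space--time random environment the Abelian/Tauberian passage from the long-time limit of the Cauchy problem to the limit of the stationary discounted solution $v^\lam$ (a solution on all of $\R^n\times\R$, not an initial-value problem) is precisely what fails to be standard without uniform Lipschitz bounds; moreover the fundamental solution gives only \emph{one-sided} control (supersolutions of the limit problem), so the two inequalities in \eqref{eq:weplim} are not symmetric. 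The paper's proof of Theorem \ref{thm:appcell} confronts this directly: it runs a reversed perturbed test function argument comparing a convexity-perturbed version $W^\ep$ of $w_\ep$ against $V^\ep$ built from $\cL^\ep$ with a distant vertex $(z-rq,s-r)$, and the upper bound uses only $p\in\partial\ol{L}(q)$, while the lower bound requires $p=D\ol{L}(q)$, which is available only when $(p,\ol{H}(p))$ is an exposed point of $\epi(\ol{H})$; general $p$ is then recovered through Straszewicz's theorem and the concavity of $p\mapsto w^\ep(\cdot,\cdot,p)$. None of this asymmetry, nor the convex-analytic machinery needed to handle it, appears in your proposal. As written, ``a resolvent identification then transfers this limit'' is an assertion of the theorem you are trying to prove, not a proof of it.

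Your closing observation --- that the space--time $C^{0,\al}$ estimates of \cite{CC,CS:12} replace the missing Lipschitz bounds --- is correct and is indeed the paper's key technical input, but it enters in the proofs of Theorem \ref{t.L.homog} and Theorem \ref{thm:appcell} (uniform continuity of $\cL^\ep$ in its first pair of variables), not merely in locating an interior maximum in the final comparison step.
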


It is well known that the Theorem \ref{thm:stoch} follows from variations of the the perturbed test function method \cite{Ev:89} if, for each $p \in \R^n$, the solution $w^\ep$ of the approximate auxiliary (cell) problem 
\begin{equation}
\label{eq:appcell}
\ep w^\ep(x,t) + w^\ep_t(x,t) - \tr(A(x,t,\om)D^2 w^\ep(x,t)) + H(p+Dw^\ep,x,t) = 0 \quad \text{ in } \R^n \times \R
\end{equation}
satisfies $\ep w^\ep$ converge uniformly to $-\ol{H}(p)$ in cylinders of radius $\sim 1/\ep$ as $\ep \to 0$. In the classical periodic setting the convergence is uniform. The need to consider large sets varying with $\ep$ was first identified in \cite{S}. Because this is standard, we omit the proof and refer, for example, to  \cite[section 7.3]{AS1} for the complete argument.

For all $\ep>0$ and $\om \in \Om$, the approximate cell problem \eqref{eq:appcell} is well posed. Recall that $Q_R \subseteq \R^{n+1}$, $R>0$, is the cylinder centered at $(0,0)$ with radius $R$. The precise statement about the convergence of $\ep w^\ep$ to $-\ol{H}(p)$ is stated in the next Theorem.

\begin{theorem}\label{thm:appcell} Assume {\upshape(A)} and let $\widetilde\Om$ be as in Theorem \ref{t.L.homog}. Then, for all $\om \in \widetilde\Om$, $p\in\R^n$ and $R>0$,
\begin{equation}
\label{eq:weplim}
\lim_{\ep \to 0} \, \sup_{Q_{R/\ep}} \, \left| \ep w^\ep(\cdot,\cdot,\om,p) + \ol{H}(p)\right| = 0.
\end{equation}
\end{theorem}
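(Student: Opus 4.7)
The plan is to establish matching one-sided bounds
\[
\limsup_{\ep\to 0}\sup_{Q_{R/\ep}}\ep w^\ep(\cdot,\cdot,\om,p) \;\le\; -\ol H(p)\;\le\;\liminf_{\ep\to 0}\inf_{Q_{R/\ep}} \ep w^\ep(\cdot,\cdot,\om,p),
\]
which together with the dual identity $-\ol H(p)=\inf_{v\in\R^n}(\ol L(v)-p\cdot v)$ (a direct consequence of \eqref{e.Hbar} and the convexity of $\ol L$ from Theorem \ref{t.L.homog}) yields \eqref{eq:weplim}. A priori uniform bounds $\|\ep w^\ep\|_{\infty}\le C$ independent of $\ep$ and $\om$ come first by comparing $w^\ep$ with constant sub- and supersolutions built from the growth condition \textup{(A4)}.

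For the upper bound, I would fix $v\in\R^n$ and a small $\delta>0$ and exploit the long-time average of the fundamental solution established in Theorem \ref{t.L.homog}. For large $T>0$ the plan is to construct, from $\cL$, a supersolution $\Phi^{\ep,T}$ of the approximate cell equation on $Q_{R/\ep}$, morally of the form
\[
\Phi^{\ep,T}(x,t)\;\approx\;e^{-\ep(T-t)}\bigl[\cL(x+vT,t+T,x,t,\om)-p\cdot v T\bigr]+K(\ep,T,v,\delta),
\]
the constant $K$ chosen so that $\ep \Phi^{\ep,T}$ is close to $\ol L(v)-p\cdot v+\delta$ once $T\sim 1/\ep$; the identity \eqref{e.L.homog} applied along the ray $(\rho v,\rho)$ makes the approximation sharp. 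Verifying the supersolution property uses only the PDE \eqref{eq:fundsol} satisfied by $\cL$ and the mean value inequality for its shifts, with mismatches absorbed into $\delta$. Comparison on $Q_{R/\ep}$ then gives $w^\ep\le\Phi^{\ep,T}$, whence $\limsup\ep w^\ep\le \ol L(v)-p\cdot v+\delta$; sending $\delta\to 0$ and infimizing over $v$ delivers the upper bound.

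The lower bound is handled dually. By convexity of $\ol H$, one chooses $v^*\in\partial\ol H(p)$, so that $\ol L(v^*)=p\cdot v^*-\ol H(p)$, and builds from $\cL$ a subsolution $\Psi^\ep$ of the approximate cell equation on $Q_{R/\ep}$ with $\ep\Psi^\ep\to-\ol H(p)$ --- the construction is symmetric to the one above, replacing the ``terminal-cost'' linear function $-p\cdot x$ with a linear profile in direction $v^*$ and reversing one inequality in the perturbation. Comparison then gives $\ep w^\ep\ge-\ol H(p)+o(1)$ uniformly on $Q_{R/\ep}$. Equivalently, one could phrase this lower bound by testing the stochastic-control representation of $w^\ep$ against a nearly optimal trajectory of average velocity $v^*$ and invoking the ergodic theorem for the discounted running cost.

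The main obstacle, and the reason for combining the methodologies of \cite{LS2} and \cite{AS1,AS2}, is that the classical versions of these sub/super-solution comparisons close by invoking uniform-in-$\ep$ Lipschitz estimates on $w^\ep$, which fail in dynamic random environments. As in Theorem \ref{t.L.homog}, the way around this is to replace Lipschitz control by the space-time $C^{0,\alpha}$ estimates of Cannarsa--Cardaliaguet \cite{CC} and Cardaliaguet--Silvestre \cite{CS:12} for superquadratic viscous Hamilton--Jacobi equations. These provide a uniform (in $\ep$ and $\om$) modulus of continuity for the rescaled $\cL$ that makes the convergence \eqref{e.L.homog} locally uniform, and, crucially, allows the reverse perturbed test-function argument to close on the enlarged cylinders $Q_{R/\ep}$ without any gradient bound on $w^\ep$.
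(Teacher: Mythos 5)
There are two genuine gaps. First, your proposed supersolution $\Phi^{\ep,T}(x,t)\approx e^{-\ep(T-t)}[\cL(x+vT,t+T,x,t,\om)-p\cdot vT]+K$ moves the vertex $(x,t)$ of the fundamental solution together with the evaluation point. But $\cL(\cdot,\cdot,y,s,\om)$ solves \eqref{eq:fundsol} only in its first pair of variables with the vertex \emph{fixed}; as a function of the vertex it satisfies no usable equation, and indeed the paper stresses that even uniform continuity of $\cL^\ep$ in $(y,s)$ is unknown. So $\Phi^{\ep,T}$ is not a supersolution of \eqref{e.acell} and the comparison step does not get off the ground. The paper's comparison function $V^\ep(x,t)=\cL^\ep(x,t,z-rq,s-r)-\cL^\ep(z,s,z-rq,s-r)-p\cdot(x-z)+\ol{H}(p)(t-s)$ keeps the vertex frozen at $(z-rq,s-r)$, with $q$ chosen so that $p\in\partial\ol{L}(q)$, and then sends $r\to\infty$ at the end; this is essential, not a cosmetic choice.

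Second, the lower bound is not obtained by ``reversing one inequality.'' The fundamental solution, being a value function (an infimum over controls), naturally produces supersolutions of the homogenized problem, not subsolutions; likewise, testing the control representation of $w^\ep$ against a single trajectory of velocity $v^*$ yields an \emph{upper} bound on $w^\ep$, so your proposed alternative argument points in the wrong direction (and the ergodic-theorem route of \cite{KRV,KV} is exactly what requires uniform ellipticity, which the paper avoids). In the paper the lower bound is genuinely asymmetric: it only closes when $(p,\ol{H}(p))$ is an \emph{exposed} point of $\epi(\ol{H})$, because then $p=D\ol{L}(q)$ and the one-sided subgradient inequality \eqref{e.subgradientL2} is upgraded to the two-sided expansion $\ol{L}(y)-\ol{L}(q)=p\cdot(y-q)+o(|y-q|)$, which is what kills the error after sending $r\to\infty$. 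General $p$ is then recovered by Straszewicz's theorem together with the concavity of $p\mapsto w^\ep(\cdot,\cdot,p)$ and convex combinations of extreme points. None of this reduction appears in your plan, and without it the argument for $\liminf \ep w^\ep\ge-\ol{H}(p)$ is incomplete. Your diagnosis of the role of the H\"older estimates of \cite{CC,CS:12} is correct in spirit, but note they are applied to $\cL^\ep$; the localization of the contact set $U_\ep$ in the reversed perturbed test function argument comes instead from the uniform bound $\|w_\ep\|_\infty\le C$ and the linearly growing penalization $c\del(\psi(x)+(s-t))$ combined with the $\lam$-convexity trick.
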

The proof of Theorem \ref{thm:appcell}, which is given in Section \ref{s.lim=H}, is based on reversed test function argument of \cite{AS1,AT2}. The differences here are the lack of Lipshitz bounds and the need to apply the method to the scaled versions of the fundamental solution instead of the metric solution.
%Due to the time oscillations in the environment, we need to extend this argument to the time dependent setting. Moreover, the perturbed test functions are compared with the (scaled versions of) fundamental solutions instead of the metric functions. This is another extension of the perturbed test function argument in reverse.

%%%%%%%%%%%%%%%%%%%%%%%%%%%%%%%%%%%%%%%%%%%%%%%%%%%%%%%%%%%%%%%%
\section{The long time behavior of the fundamental solution}
\label{sec.L}

We investigate the long time average %$\frac{1}{\rho} \cL(\rho x,\rho t, \rho y, \rho s,\om)$ 
of the fundamental solution $\cL$, as $\rho \to \infty$. The averaged function is given by the subadditive ergodic theorem, which is a natural tool for the study 
of $\cL$ in view of the following Lemma. % which we state without a proof, since the arguments are standard.

\begin{lemma}\label{lem:statsubadd}
Assume {\upshape(A)}. Then, for all $\om \in \Om$ and $x,y,z\in \R^d$,

\noindent{\upshape(i)} if $t,s,\rho \in \R$ and $t \ge s$, then
\begin{equation}\label{e.L.stationary}
\cL(x+z,t+\rho,y+z,s+\rho,\om) = \cL(x,t,y,s,\tau_{(z,\rho)}\om), %\ \text{and}
\end{equation}
and
 
\noindent{\upshape(ii)} if $t, s, r \in \R$ satisfy $s \le r \le t$, then
\begin{equation}\label{mono-t}
\cL(x,t,y,s,\om) \le \cL(x,t,z,r,\om) + \cL(z,r,y,s,\om).
\end{equation}
\end{lemma}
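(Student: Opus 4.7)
Part (i) will be a direct consequence of uniqueness for the Cauchy problem \eqref{eq:fundsol} combined with the stationarity hypothesis (A2). Setting $w(x,t) := \cL(x+z,\, t+\rho,\, y+z,\, s+\rho,\, \om)$, the chain rule and (A2) show that $w$ is a viscosity solution on $\R^n \times (s,\infty)$ of
\begin{equation*}
w_t - \tr\bigl(A(x+z,\,t+\rho,\,\om)\,D^2 w\bigr) + H\bigl(Dw,\,x+z,\,t+\rho,\,\om\bigr) = 0,
\end{equation*}
and (A2) lets us replace the arguments $(x+z,\,t+\rho,\,\om)$ of $A$ and $H$ by $(x,\,t,\,\tau_{(z,\rho)}\om)$. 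Thus $w$ solves the same PDE as $\cL(\cdot,\cdot,y,s,\tau_{(z,\rho)}\om)$; the two also share the initial datum at $t = s$, since $\delta(\cdot+z, y+z) = \delta(\cdot, y)$. The uniqueness result (the CLS extension \cite{CLS:89} used to define $\cL$) then identifies them and yields \eqref{e.L.stationary}.

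Part (ii) is the ``semigroup'' property of the fundamental solution, which I will derive from viscosity comparison applied to a constant shift. Set $C := \cL(z, r, y, s, \om)$ and, for $t \ge r$, define
\begin{equation*}
v(x, t) := \cL(x, t, z, r, \om) + C, \qquad u(x, t) := \cL(x, t, y, s, \om).
\end{equation*}
Since \eqref{eq:fundsol} is invariant under additive constants, $v$ is a viscosity solution of the same equation on $\R^n \times (r, \infty)$ with initial trace $\delta(\cdot, z) + C$ at $t = r$ in the CLS sense, while $u$ is continuous on $\R^n \times [r, \infty)$ and satisfies $u(z, r) = C$. The pointwise ordering $v(\cdot, r) \ge u(\cdot, r)$ is then immediate: $v(x, r) = +\infty$ for $x \neq z$, and $v(z, r) = C = u(z, r)$.

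The task is to propagate this ordering to $u \le v$ on $\R^n \times (r, \infty)$, and the only subtle point is the $\delta$-type singularity of $v$ at $t = r$, which prevents a direct application of standard comparison at the initial time. I would circumvent it by a small double perturbation: fix $\eta > 0$, and note that as $\delta \downarrow 0$, $\cL(\cdot, r+\delta, z, r, \om) \to +\infty$ locally uniformly on $\R^n \setminus \{z\}$, while near $z$ we have $u(\cdot, r+\delta) \to u(z,r) = C$ by continuity; hence for $\delta > 0$ small enough depending on $\eta$, the bounded continuous function $u(\cdot, r+\delta) - \eta$ is pointwise dominated by $v(\cdot, r+\delta)$ on all of $\R^n$. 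At this level both sides are continuous, so the standard viscosity comparison for \eqref{eq:fundsol} with bounded continuous data gives $u - \eta \le v$ on $\R^n \times [r+\delta, \infty)$; letting $\delta \downarrow 0$ and then $\eta \downarrow 0$ produces \eqref{mono-t}. The boundary cases $r = s$ or $r = t$ reduce to trivial inequalities involving $\delta(z,y)$ or $\delta(x,z)$, so they require no further argument.
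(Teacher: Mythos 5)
Your overall strategy is exactly the one the paper has in mind (it omits the proof, saying only that (i) follows from uniqueness plus stationarity and (ii) from the comparison principle plus the singular initial data), and your Part (i) is complete and correct: the translated function solves the translated equation, (A2) converts the translated coefficients into the coefficients at $\tau_{(z,\rho)}\om$, the initial traces agree, and uniqueness in the sense of \cite{CLS:89} identifies the two solutions.

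Part (ii) has one genuine gap, at spatial infinity. You assert that for small $\delta$ the function $u(\cdot,r+\delta)-\eta$ is \emph{bounded} and is dominated by $v(\cdot,r+\delta)$ \emph{on all of} $\R^n$, and you justify the domination only by (a) locally uniform blow-up of $\cL(\cdot,r+\delta,z,r,\om)$ on $\R^n\setminus\{z\}$ and (b) continuity of $u$ near $(z,r)$. Neither claim controls large $|x|$: by \eqref{eq:Lbdd} the function $u(\cdot,r+\delta)=\cL(\cdot,r+\delta,y,s,\om)$ grows like $|x-y|^{\gamma'}$ with $\gamma'>1$, so it is not bounded, while the only lower bound the paper records for $v(x,r+\delta)=\cL(x,r+\delta,z,r,\om)+C$ is the constant $-C\delta+C$, which does not dominate a superlinearly growing function outside a compact set. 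Consequently the ``standard viscosity comparison for bounded continuous data'' you invoke is not applicable, and the pointwise ordering at time $r+\delta$ is not established where it matters most. The repair is standard but must be said: either use the coercivity of $H$ (the left inequality in \eqref{A2}) to get the matching \emph{lower} bound $\cL(x,r+\delta,z,r,\om)\ge \frac1C|x-z|^{\gamma'}\delta^{1-\gamma'}-C\delta$, whose constant $\delta^{1-\gamma'}$ beats the constant $(r+\delta-s)^{1-\gamma'}$ in the upper bound for $u$ once $\delta$ is small, so that the ordering at time $r+\delta$ holds for large $|x|$ as well; or invoke directly the comparison theorem of \cite{CLS:89} for solutions with singular initial data and $|x|^{\gamma'}$ growth, which is designed precisely for this situation and subsumes your $\eta,\delta$ perturbation. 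With either fix the rest of your argument (letting $\delta\downarrow0$, then $\eta\downarrow0$, and the trivial boundary cases $r=s$, $r=t$) goes through.
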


The stationarity of $\cL$ is an immediate consequence of the uniqueness of \eqref{eq:fundsol} and the stationarity of the environment. The subadditivity of $\cL$ follows from the comparison principle for \eqref{eq:fundsol} and the singular initial conditions of the fundamental solutions. Since the proof of Lemma \ref{lem:statsubadd} is standard, we omitted it. 

Next we recall a result of \cite[Proposition 6.9]{LS2} that concerns bounds on the unscaled function $\cL$. Although \cite{LS2} considered time homogeneous environments, the proof of the following result does not depend on that fact.

\begin{lemma}\label{l.L.bdd} 
Assume {\upshape(A)} and let $\gam' := \gam/(\gam-1)$. % be the H\"older conjugate of $\gam$. 
There exists a constant $C>0$ such that, for all $\om \in \Om$, $x,y \in \R^n$ and $t, s \in \R$ with $t > s$, 
\begin{equation}\label{eq:Lbdd}
-C(t-s) \le \cL(x,t,y,s,\omega) \le C\left( \frac{|x-y|^{\gam'}}{(t-s)^{\gam'-1}} + (t-s)^{1-\frac{\gam'}{2}} + (t-s)\right).
\end{equation}
\end{lemma}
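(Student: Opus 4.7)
The plan is to prove the two inequalities in \eqref{eq:Lbdd} separately by exhibiting explicit sub- and super-solution barriers for \eqref{eq:fundsol} and invoking the comparison principle, extended to the singular initial condition $\delta(\cdot,y)$ in the sense of \cite{CLS:89}.

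For the lower bound I would take the affine subsolution $\underline\phi(x,t) := -K(t-s)$ with $K\ge C$. Since (A4) yields $H(0,\cdot,\cdot,\omega)\le C$, one checks directly that $\underline\phi$ is a classical subsolution of \eqref{eq:fundsol}, and $\underline\phi(\cdot,s)\equiv 0 \le \delta(\cdot,y)$, so comparison gives $\cL\ge\underline\phi$.

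For the upper bound the candidate barrier is the smooth supersolution
\[
\bar\phi(x,t) := \alpha\,\frac{(|x-y|^2+\eta(t-s))^{\gamma'/2}}{(t-s)^{\gamma'-1}} + \mu(t-s),
\]
with positive constants $\alpha,\eta,\mu$ to be chosen. Keeping $\eta>0$ makes $\bar\phi\in C^\infty(\R^n\times(s,\infty))$, which is essential because, for $\gamma'\in(1,2)$, the Hessian of $|x-y|^{\gamma'}$ itself is singular at $x=y$. The initial condition matches in the relaxed sense: $\bar\phi(y,t)\to 0$ while $\bar\phi(x,t)\to +\infty$ for $x\ne y$ as $t\downarrow s$. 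The pointwise supersolution inequality $\bar\phi_t-\tr(AD^2\bar\phi)+H(D\bar\phi,\cdot,\cdot,\omega)\ge 0$ on $\R^n\times(s,\infty)$ will then be verified by direct differentiation: writing $g:=|x-y|^2+\eta(t-s)$ and using the coercivity $H(p)\ge|p|^\gamma/C-C$ together with the conjugate identity $(\gamma'-1)\gamma=\gamma'$ produces competing terms of orders $\alpha g^{\gamma'/2-1}(t-s)^{-(\gamma'-1)}$ and $\alpha g^{\gamma'/2}(t-s)^{-\gamma'}$. I would then choose $\eta$ large (depending on a uniform bound on $|A|$, which follows from stationarity and (A3), and on the dimension) so that $\tr(AD^2\bar\phi)$ is absorbed by the positive part of $\bar\phi_t$ coming from differentiating $\eta(t-s)$; $\alpha$ large so that the coercive contribution of $H$ dominates the negative part of $\bar\phi_t$; and finally $\mu\ge C$ to absorb the additive constant from (A4). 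Comparison will then yield $\cL\le\bar\phi$, and the stated upper bound follows from the subadditivity $(a+b)^{\gamma'/2}\le a^{\gamma'/2}+b^{\gamma'/2}$ on $[0,\infty)$, valid because $\gamma'/2\in(0,1)$, which splits $\bar\phi$ into exactly the three terms appearing in \eqref{eq:Lbdd}, with a new constant $C=\max(\alpha,\alpha\eta^{\gamma'/2},\mu)$.

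The main obstacle will be the parameter-balancing step in the supersolution verification, in particular across the transitional regime $|x-y|^2\sim\eta(t-s)$: outside this regime the $H$-coercivity clearly dominates, and inside it the positive $\bar\phi_t$-contribution from the regularization takes over, but confirming that a single choice of $(\alpha,\eta,\mu)$ works uniformly in $(x,t,\omega)$ requires careful bookkeeping. This is most delicate when $\gamma$ is close to $2$ (equivalently, $\gamma'$ close to $2$), and the strict superquadratic hypothesis $\gamma>2$ is used both here and in the final subadditivity step that produces the three-term bound.
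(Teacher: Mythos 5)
Your barrier construction is correct and is essentially the argument behind the paper's proof, which is not written out here but deferred to \cite[Proposition 6.9]{LS2}: the lower bound via the affine subsolution $-K(t-s)$ with $K\ge C$ from (A4), and the upper bound via comparison with the regularized supersolution $\alpha\bigl(|x-y|^2+\eta(t-s)\bigr)^{\gamma'/2}(t-s)^{1-\gamma'}+\mu(t-s)$, whose trace at $x=y$ is precisely the source of the $(t-s)^{1-\gamma'/2}$ term, and whose verification hinges on the exponent identity $(\gamma'-1)\gamma=\gamma'$ exactly as you indicate (the parameter balancing you flag does close: after dividing out $g^{\gamma'/2-1}(t-s)^{1-\gamma'}$ the inequality reduces to a one-variable inequality in $u=|x-y|^2/\eta(t-s)$, where the constant margin $1-\gamma'/2>0$ handles small $u$ and the coercive term, of size $\alpha^{\gamma-1}u^{\gamma/2}(1+u)^{1-\gamma/2}\gtrsim \alpha^{\gamma-1}u$ for $u\gtrsim 1$, handles the linear loss $(\gamma'-1)u$ once $\alpha$ is large). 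Two routine caveats: the uniform bound on $A$ is an implicit standing hypothesis rather than a consequence of stationarity and (A3) alone, and the comparison against the singular initial datum should be run through the monotone approximation of $\delta(\cdot,y)$ by Lipschitz data as in \cite{CLS:89}, which you correctly anticipate.
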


To study the long time average of $\cL$, we define , for $\ep > 0$, the rescaled function
\begin{equation}
\label{eq:Lep}
\cL^\ep(x,t,y,s,\om) := \ep \cL\left( \frac{x}{\ep},\frac{t}{\ep},\frac{y}{\ep},\frac{s}{\ep},\om\right).
\end{equation}
It is immediate that, for each fixed $(y,s) \in \R^{n+1}$,  $\cL^\ep(\cdot,\cdot,y,s,\om)$ solves
\begin{equation}
\label{eq:cLe}
\left\{
\begin{aligned}
& \cL^\ep_t - \ep \tr\left(A\left(\frac{\cdot}{\ep},\frac{\cdot}{\ep},\om\right)D^2\cL^\ep\right) + H\left(D \cL^\ep, \frac{\cdot}{\ep},\frac{\cdot}{\ep},\om\right) = 0 &\quad &\text{ in } \R^n \times (s,\infty),\\
&\cL^\ep(\cdot,s,y,s,\om) = \ep\delta\left(\frac{\cdot}{\ep},\frac{y}{\ep}\right) = \delta(\cdot,y) &\quad &\text{ on } \R^n \times \{s\}.
\end{aligned}
\right.
\end{equation} 

It now follows from Lemma \ref{l.L.bdd}, after the rescaling, that, for all $t > s$,
\begin{equation*}
-C(t-s) \le \cL^\ep(x,t,y,s,\om) \le C\left(\frac{|x-y|^{\gam'}}{(t-s)^{\gam'-1}} + \ep^{\frac{\gam'}{2}}(t-s)^{1-\frac{\gam'}{2}} + (t-s)\right).
\end{equation*}
Note that $\gam' \in (1,2)$.  As a result, for all $0 < \ep < 1$, $R \ge 1$, $r \in (0,1)$, $x,y \in B_R$, and $t,s \in \R$ with $ t-s \in (r, R)$, we have
\begin{equation}
\label{e.Lep_bdd}
|\cL^\ep(x,t,y,s,\om)| \le CR\left(\frac{R}{r} \right)^{\gam' - 1} + CR,
\end{equation}
and, hence, $\cL^\ep$ is uniformly bounded on the set $\{(x,y,t,s) \,:\, x,y \in B_R, \, r \le t-s \le R\}$. This and the superquadratic growth of $H$ allow us to apply the H\"older regularity results in \cite{CC,CS:12} to get the following uniform in $\ep$ estimates for $\cL^\ep$.

\begin{proposition}\label{prop:uc} Assume {\upshape(A)}. Then there exists $\alpha \in (0,1)$, and, for all $R \ge 1$, $r \in (0,1)$, and $(y,s) \in \R^{n+1}$, the function $\cL^\ep(\cdot,\cdot,y,s,\om)$ is uniformly with respect to $\ep$, $\om$ and $(y,s)$ $\alpha$-H\"older continuous on the set $Q_{R,r,R}(y,s)$. %Both the H\"older exponent  and the bounding constant are independent of $\ep$, $\om$ and $(y,s)$.
\end{proposition}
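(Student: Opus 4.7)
The plan is to apply the interior space-time H\"older regularity results of Cannarsa--Cardaliaguet \cite{CC} and Cardaliaguet--Silvestre \cite{CS:12} directly to the rescaled fundamental solution $\cL^\ep(\cdot,\cdot,y,s,\om)$ viewed as a bounded viscosity solution of \eqref{eq:cLe}. Those results provide, for viscous Hamilton-Jacobi equations whose Hamiltonian has superquadratic growth in the gradient, an interior $C^{0,\alpha}$ estimate in space and time whose exponent $\alpha$ and constant depend only on the growth parameters of $H$ (here $\gam$ and the constant $C$ in \eqref{A2}), the dimension $n$, the $L^\infty$ norm of the solution on a slightly enlarged cylinder, and the distance to the parabolic boundary. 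In particular, the estimate is insensitive to the (possibly vanishing or degenerate) ellipticity of the diffusion matrix and to the spatial or temporal smoothness of the coefficients.

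First I would reduce to the case $(y,s)=(0,0)$ by the stationarity identity \eqref{e.L.stationary} after rescaling, which shows that $\cL^\ep(\cdot,\cdot,y,s,\om)$ and $\cL^\ep(\cdot-y,\cdot-s,0,0,\tau_{(y/\ep,s/\ep)}\om)$ coincide; any estimate uniform in $\om$ therefore yields uniformity in $(y,s)$ as well. Next I would fix parameters $R'>R$ and $r'\in(0,r)$ (for instance $R'=2R$ and $r'=r/2$) and invoke the uniform pointwise bound \eqref{e.Lep_bdd}, which gives
\begin{equation*}
\sup_{\ep\in(0,1),\,\om\in\Om,\,(x,t)\in Q_{R',r',R'}} |\cL^\ep(x,t,0,0,\om)| \;\le\; C(R,r),
\end{equation*}
with $C(R,r)$ independent of $\ep$, $\om$ and $(y,s)$. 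The crucial point is that the structural constants of \eqref{eq:cLe} are $\ep$-independent: the Hamiltonian $H$ appearing in \eqref{eq:cLe} is exactly the original one and therefore still satisfies \eqref{A2} with the same $\gam>2$ and the same $C$, while the diffusion $\ep A(\cdot/\ep,\cdot/\ep,\om)$, though possibly degenerate as $\ep\to0$, remains of the form $\widetilde\sigma\widetilde\sigma^T$ with bounded $\widetilde\sigma$.

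I would then apply the interior H\"older estimate of \cite{CC,CS:12} on $Q_{R',r',R'}$ to conclude that $\cL^\ep(\cdot,\cdot,0,0,\om)$ is $\alpha$-H\"older continuous on $Q_{R,r,R}$ with a seminorm controlled solely by $C(R,r)$, $\gam$, $n$ and the geometric parameters $R,r,R',r'$. Combined with the stationarity reduction, this yields the claimed uniform estimate on $Q_{R,r,R}(y,s)$.

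The only real point to verify carefully, and the step I expect to require the most attention, is that the quantitative H\"older estimates of \cite{CC,CS:12} are genuinely independent of the ellipticity constant of the diffusion matrix and of any regularity of $A$ in $x$ and $t$; without this invariance the degeneracy of $\ep A(\cdot/\ep,\cdot/\ep,\om)$ as $\ep\to 0$ would destroy the uniformity. This is precisely the feature of those estimates that is emphasized in the introduction of \cite{CS:12}, where the proof relies on the superquadratic growth of $H$ to dominate the diffusion term and extract a self-improving oscillation decay. Once this invariance is recorded, the conclusion follows at once from the $L^\infty$ bound \eqref{e.Lep_bdd} and the scaling identity \eqref{e.L.stationary}.
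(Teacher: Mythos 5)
Your argument coincides with the paper's: the authors also omit the detailed proof and observe that, given the uniform bound \eqref{e.Lep_bdd}, the claim is an immediate consequence of Theorem 6.7 of \cite{CC} (see also Theorem 1.3 of \cite{CS:12}), whose constants depend only on the superquadratic growth of $H$ and not on the ellipticity or regularity of the diffusion. Your additional remarks on the stationarity reduction in $(y,s)$ and on the $\ep$-independence of the structural constants are correct and simply make explicit what the paper leaves implicit.
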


We omit the proof of Proposition \ref{prop:uc}, which, in view of \eqref{e.Lep_bdd}, is an immediate consequence of Theorem 6.7 of \cite{CC} (see also Theorem 1.3 of \cite{CS:12}). 

\subsection{Long time average of $\cL$} %In this section we present the proof Theorem \ref{t.L.homog}. 
The stationarity of $\cL$ in \eqref{e.L.stationary} and the scaling in the definition of $\cL^\ep$ suggest that the limit, as $\ep \to 0$,  of $\cL^\ep(x,t,y,s,\om)$ must only depend on $(x-y)/(t-s)$. To get the limit, it suffices to set $(y,s) = (0,0)$, $t = 1 > s$, and study the limit of the function $\rho^{-1}\cL(\rho x,\rho,0,0,\om)$ as $\rho \to \infty$. This is possible using the subadditive ergodic theorem, which yields a random variable $\ol{L}(x,\omega)$.

%%%%%%%%%%%%%%%
% Pointwise convergence

\begin{theorem}\label{thm:sae} Assume {\upshape(A)}. For any $x \in \R^n$, there exists a random variable $\ol{L}(x,\om)$ and $\Om_{x} \in \cF$ of full measure such that, for all $\om \in \Om_{x}$,
\begin{equation}\label{e.Llim.pw}
\lim_{\rho \to \infty} \frac{1}{\rho} \cL(\rho x, \rho, 0, 0, \om) = \ol{L}(x,\om).
\end{equation}
Moreover, $\ol{L}(x,\cdot)$ is almost surely the  constant $\E \ol{L}(x,\cdot).$
\end{theorem}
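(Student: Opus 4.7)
The plan is to apply the subadditive ergodic theorem to a one-parameter process extracted from $\cL$, upgrade the resulting invariance to the full $\R^{n+1}$-action using the H\"older estimate of Proposition \ref{prop:uc}, and conclude by ergodicity. Fix $x \in \R^n$ and set $F_{s,t}(\om) := \cL(tx, t, sx, s, \om)$ for $s < t$. Lemma \ref{lem:statsubadd} provides stationarity $F_{s+a, t+a}(\om) = F_{s, t}(\tau_{(ax, a)}\om)$ for the measure-preserving semigroup $T^a := \tau_{(ax, a)}$ together with subadditivity $F_{s, t} \le F_{s, r} + F_{r, t}$, and Lemma \ref{l.L.bdd} supplies the deterministic bound $|F_{0, t}(\om)| \le C(|x|)(t + 1)$ needed for integrability. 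Kingman's discrete subadditive ergodic theorem applied with $T = T^1 = \tau_{(x, 1)}$ then produces a $T$-invariant random variable $\ell(x, \om)$ such that $F_{0, n}/n \to \ell(x, \om)$ almost surely. The passage to continuous $\rho \to \infty$ is routine thanks to the uniform bound $|F_{n, n+r}(\om)| \le C(|x|)$ for $r \in (0, 1]$ (Lemma \ref{l.L.bdd}), and a short comparison based on the identity $F_{a, \rho+a}(\om) = F_{0, \rho}(T^a \om)$ together with $F_{0, \rho+a} \le F_{0, a} + F_{a, \rho+a}$ upgrades the $T$-invariance of $\ell(x, \cdot)$ to $T^a$-invariance for every $a \in \R$.

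The crucial step is to establish invariance of $\ell(x, \cdot)$ under pure spatial shifts $\tau_{(y, 0)}$, $y \in \R^n$. By stationarity, $F_{0, n}(\tau_{(y, 0)}\om) = \cL(nx+y, n, y, 0, \om)$, so the aim is to show that this quantity, divided by $n$, tends to $\ell(x, \om)$ almost surely. Subadditivity at the intermediate point $(x, 1)$ gives
\begin{equation*}
\cL(nx+y, n, y, 0, \om) \le \cL(nx+y, n, x, 1, \om) + \cL(x, 1, y, 0, \om),
\end{equation*}
with the last term uniformly bounded by Lemma \ref{l.L.bdd}. Stationarity rewrites the first term on the right as $\cL((n-1)x+y, n-1, 0, 0, \tau_{(x, 1)}\om)$, and rescaling together with Proposition \ref{prop:uc} applied to $\cL^{1/(n-1)}(\cdot, \cdot, 0, 0, \tau_{(x, 1)}\om)$ bounds this expression by $F_{0, n-1}(\tau_{(x, 1)}\om) + C|y|^{\alpha}(n-1)^{1-\alpha}$. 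Since $F_{0, n-1}(\tau_{(x, 1)}\om)/(n-1) \to \ell(x, \tau_{(x, 1)}\om) = \ell(x, \om)$ by Kingman and $T$-invariance and $1 - \alpha < 1$, dividing by $n$ and sending $n \to \infty$ yields $\ell(x, \tau_{(y, 0)}\om) \le \ell(x, \om)$ almost surely. The reverse inequality follows by applying the same argument with $(-y, \tau_{(y, 0)}\om)$ in place of $(y, \om)$.

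For arbitrary $(z, a) \in \R^{n+1}$, the factorization $\tau_{(z, a)} = \tau_{(z-ax, 0)} \circ T^a$ combined with the two invariances above and the measure-preservation of $T^a$ implies $\ell(x, \tau_{(z, a)}\om) = \ell(x, \om)$ almost surely. Consequently, each level set $\{\ell(x, \cdot) > c\}$ is essentially invariant under the full $\R^{n+1}$-action, and property (P2) forces its probability to be $0$ or $1$. Hence $\ell(x, \cdot)$ is almost surely equal to the deterministic constant $\ol{L}(x) := \E \ell(x, \cdot)$, which completes the proof.

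The main obstacle is the spatial-shift invariance of the middle step: Kingman delivers invariance only along the drifting orbit $T^a$, while ergodicity as stated in (P2) requires invariance under the entire group. The bridge is the uniform H\"older estimate of Proposition \ref{prop:uc}, whose validity ultimately rests on the superquadratic growth of $H$ through the regularity theory of \cite{CC, CS:12}; this is the single place where the superquadratic hypothesis actively enters the proof.
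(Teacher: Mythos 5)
Your proposal is correct and follows essentially the same route as the paper: the subadditive ergodic theorem applied along the line $\{(tx,t)\}$, the uniform H\"older estimate of Proposition \ref{prop:uc} to move the forward evaluation point, one-sided subadditivity plus Lemma \ref{l.L.bdd} to move the vertex to a point on that line, and ergodicity via (P2). The only (harmless) stylistic difference is that you obtain the reverse inequality in the shift-invariance step by applying the one-sided bound at $\tau_{(y,0)}\om$ with shift $-y$, whereas the paper argues both directions directly by choosing an intermediate vertex slightly after, respectively slightly before, the shifted vertex in time.
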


%We will henceforth denote $\ol{L}(x,\cdot)$ by $\ol{L}(x) := \E \ol{L}(x,\cdot)$ and it is a deterministic function.

That $\ol{L}(x,\cdot)$ is deterministic is important for the final homogenization result. This is usually proved by showing that $\ol{L}(x,\cdot)$ is invariant with respect to the translations $\{\tau_{(y,s)}\}_{(y,s)\in \R^{n+1}}$. In the time homogeneous setting \cite{LS2} or for first order equations \cite{Sch}, the translation invariance of $\ol{L}(x,\cdot)$ is a consequence of uniform in $\ep$ continuity of $\cL^\ep(x,t,y,s,\om)$ in all of its variables. For the problem at hand 
%For second order equations in time dependent environments and with superquadratic Hamiltonians, 
Proposition \ref{prop:uc} gives that $\cL^\ep$ is uniformly  continuous with respect to its first pair of variables.  The uniform continuity with respect to the second  pair of variables, that is the vertex, is more subtle and unknown up to now. 

We prove next that $\ol{L}(x,\cdot)$ is translation invariant  without using uniform continuity of $\cL^\ep$ with respect to $(y,s)$. The argument  is based on two observations. Firstly, $\ol{L}(x,\cdot)$ is invariant when the vertex varies along the line $l_x := \{(tx,t)\,:\, t \in \R\}$. Secondly, the subadditive property \eqref{eq:Lbdd} and the uniform bounds \eqref{e.Lep_bdd} yield one-sided bounds for $\cL$. Indeed, to bound $\cL(\cdot,\cdot,y,s,\om)$ from above, we compare it with $\cL(\cdot,\cdot,z,r)$ at a vertex $(z,r)$ such that $r>s$, and bounded $|r-s|$ and $|z-y|$. Similarly, for a lower bound, we compare with a vertex that has $r<s$.

The proof of Theorem \ref{thm:sae} is divided in three steps. In the first step  we identify $\ol{L}(x,\om)$ by applying the subadditive ergodic theorem to $\rho^{-1} \cL(\rho x, \rho, kx,k,\om)$ with vertex $(kx,k) \in l_x$. Then, we establish the invariance of $\ol{L}(x,\om)$ with respect to vertices in $l_x$. Finally in the third step, we show that $\ol{L}(x,\cdot)$ is invariant with respect to $\{\tau_{(y,s)}\}$ and, hence, deterministic. % by the one-sided estimates mentioned above.

\begin{proof}[Proof of Theorem \ref{thm:sae}] {\itshape Step 1: The convergence of $\cL$ with vertex $(0,0)$}. This is a straight forward application of the classical subadditive ergodic theorem (see, for instance, Theorem 2.5 of Akcoglu and Krengel \cite{AK:81}). For the sake of the reader we briefly recall  the argument next. 

Fix $x \in \R^n$, let   $\cI$ be the set of intervals of the form $[a,b) \subset [0,\infty)$, and consider the map $F: \cI \times \Om \to \R$
%, where $\cI$ is the set of intervals of the form $[a,b) \subset [0,\infty)$, be the mapping:
\begin{equation*}
F([a,b),\om) := \cL(bx,b,ax,a,\om).
\end{equation*}
Lemma \ref{lem:statsubadd} yields that  $F(\cdot,\om)$ is a stationary subadditive family with respect to the measure preserving semigroup $(\theta_{c})_{c \in \R_+}$ given by $\theta_c\, \om = \tau_{(cx,c)}\om$. Moreover, it follows from
%That is, for all $\om\in \Om$, $0 \le a < b < d$ and $c > 0$,
%\begin{equation*}
%F([a+c,b+c),\om) = F([a,b),\theta_{c}\, \om) \quad\text{and}\quad
%F([a,b)\cup[b,d),\om) \le F([a,b),\om) + F([b,d),\om).
%\end{equation*}
 \eqref{eq:Lbdd}, that the family $\{F([a,b),\cdot)\,:\, [a,b) \subseteq (0,1)\}$ is uniformly integrable in $\Om$. 
 
Then the  subadditive ergodic theorem implies  the existence  a random variable $\ol{L}(x,\om; 0)$ which is invariant with respect to $\{\theta_c\}_{c\in\R_+}$, and an event $\Om_{x,0}$ with full measure, such that, for all  $\om \in \Om_{x,0},$
\begin{equation*}
\lim_{\rho \to \infty} \frac{1}{\rho} F([0,\rho),\om) = \lim_{\rho \to \infty} \frac{1}{\rho} \cL(\rho x, \rho, 0, 0, \om) = \ol{L}(x,\om; 0). % \quad\quad \forall\, \om \in \Om_{x,0}.
\end{equation*}
Here, the parameter $0$ in $\ol{L}(x,\om;0)$ and $\Om_{x,0}$ indicates that the vertex of $\cL$ is $(0x,0)$. 

By the same argument, for each $k\in \Z$,  there exist a  random variable $\ol{L}(x,\cdot;k)$, which is  invariant with respect to $\{\theta_c\}_{c\in\R_+}$, and events $\Om_{x,k}$ of full measure such that, for all $\om \in \Om_{x,k}$,
\begin{equation}\label{eq:uc_k}
\lim_{\rho \to \infty} \frac{1}{\rho} \cL(\rho x, \rho, kx, k, \om) = \ol{L}(x,\om;k). % \quad\quad \forall\, \om \in \Om_{x,k}.
\end{equation}

We note that,  for all $c \in \R_+$ and $k\in \Z$,  $\ol{L}(x, \theta_c \om;k) = \ol{L}(x,\om;k)$. Even so, $\ol{L}(x,\cdot;k)$ is not necessarily deterministic, because the semigroup $(\theta_c)_{c \in \R_+}$ may not be ergodic.

\medskip

{\itshape Step 2: The invariance of $\ol{L}(x,\cdot;k)$ with respect to $k \in \Z$.}  Let $\Om_x = \bigcap_{k\in \Z} \Om_{x,k}$. Then 
\begin{equation}
\label{eq:Lk}
\ol{L}(x,\cdot\,;k) = \ol{L}(x,\cdot\,;0) \quad \text{ on } \Om_x \  \text{ for all } k \in \Z.
\end{equation}
The $\{\theta_c\}$ invariance of  $\ol{L}(x,\cdot\,;k)$ and \eqref{e.L.stationary} imply, for all $\om \in \Om_x$ and $k \in \Z$,
\begin{equation*}
\ol{L}(x,\om;k) = \ol{L}(x,\tau_{(x,1)}\om;k) =  \lim_{\rho \to \infty} \frac{1}{\rho} \cL(\rho x + x, \rho +1, (k+1)x, k+1,\om).
\end{equation*}
The uniform H\"older continuity of $\frac{1}{\rho} \cL(\rho \cdot, \rho \cdot, (k+1)x,k+1,\om)$ in Proposition \ref{prop:uc} shows
\begin{equation*}
\lim_{\rho \to \infty} \frac{1}{\rho} \left|\cL(\rho x + x, \rho +1, (k+1)x, k+1,\om) - \cL(\rho x, \rho, (k+1)x, k+1,\om)\right| = 0.
\end{equation*}
Combining the last two observations, we find that
\begin{equation*}
\ol{L}(x,\cdot\,;k) = \lim_{\rho \to \infty} \frac{1}{\rho} \cL(\rho x, \rho, (k+1)x, k+1,\om) = \ol{L}(x,\cdot\,;k+1).
\end{equation*}
%Since $k$ is arbitrary, the claim \eqref{eq:Lk} is verified. 

We henceforth denote $\ol{L}(x,\cdot;k)$ by $\ol{L}(x,\cdot)$, and conclude that the rescaled function $\rho^{-1}\cL(\rho x, \rho, kx, k, \om)$ converges to $\ol{L}(x,\cdot)$ for all $k\in \Z$ and $\om\in \Om_x$.
\medskip

{\itshape Step 3: $\ol{L}(x,\cdot)$ is deterministic}. We show that $\ol{L}(x,\cdot)$ is translation invariant with respect to $\{\tau_{(y,s)}\}$, $(y,s) \in \R^{n+1}$. The conclusion then follows from ergodicity of  $\{\tau_{(y,s)}\}$.

Fix $\om \in \Om_x$ and $(y,s) \in \R^{n+1}$ and choose $k_1 \in \Z$ so that $k_1 \in [s,s+1)$. It follows from  \eqref{e.L.stationary} and \eqref{mono-t}, that 
\begin{equation}
\label{eq:sae1}
\begin{aligned}
\cL(\rho x, \rho, 0,0,\tau_{(y,s)}\om) &= \cL(\rho x + y, \rho + s, y,s,\om)\\
 &\le  \cL(\rho x + y, \rho +s, k_1x, k_1,\om) + \cL(k_1 x, k_1, y,s,\om).
\end{aligned}
\end{equation}
Using \eqref{eq:Lbdd}, $k_1 - s \in [0,1)$, $\gam' \in (1,2)$ and that $k_1 x$ and $y$ are bounded, we observe
\begin{equation*}
\lim_{\rho \to \infty} \frac{1}{\rho} \left|\cL(k_1 x, k_1,y,s) \right| \le \lim_{\rho \to \infty} \frac{C}{\rho} \left( \left|k_1 x - y\right|^{\gam'} + |k_1 - s|^{1-\frac{\gam'}{2}} + |k_1 - s| \right) = 0.
\end{equation*}
For the other term in the right hand side of\eqref{eq:sae1}, we have
\begin{equation*}
\begin{aligned}
&\lim_{\rho \to \infty} \frac{1}{\rho} \cL(\rho x + y, \rho + s, k_1 x,k_1,\om) = \lim_{\rho \to \infty} \frac{1}{\rho} \cL(\rho x, \rho, k_1 x,k_1,\om)\\
&\qquad\qquad + \lim_{\rho \to \infty} \left[\frac{1}{\rho} \cL(\rho x + y, \rho + s, k_1 x,k_1,\om) - \frac{1}{\rho} \cL(\rho x, \rho, k_1 x,k_1,\om)\right].
\end{aligned}
\end{equation*}
As in Step 2, the second term on the right hand side above converges to zero in view of Proposition \ref{prop:uc}, while the limit of the first term is precisely $\ol{L}(x,\om)$. It follows that 
\begin{equation}
\label{eq:sae2}
\limsup_{\rho \to \infty} \frac{1}{\rho} \cL(\rho x, \rho, 0,0,\tau_{(y,s)}\om) \le \ol{L}(x,\om).
\end{equation}

Similarly, given $(y,s) \in \R^{n+1}$, we choose $k_2 \in \Z$ such that $k_2 \in (s-1,s]$ and argue as above to find
\begin{equation}
\label{eq:sae3}
\liminf_{\rho \to \infty} \frac{1}{\rho} \cL(\rho x, \rho, 0,0,\tau_{(y,s)}\om) \ge \ol{L}(x,\om).
\end{equation}
Since $(y,s)$ is arbitrary and $\Om_x$ has full measure, we conclude that $\ol{L}(x,\cdot)$ is translation invariant. %This establishes the goal of Step 3 and completes the proof.
\end{proof}

%Theorem \ref{thm:sae} characterizes the long time average $\rho^{-1} \cL(\rho x, \rho, 0,0,\om)$ pointwise in $x$, with a fixed vertex that does not change with $\rho$. 
Next, we show that the limit $\ol{L}$ is local uniform continuous, and the convergence holds locally uniformly in $x$, again, with fixed vertices. 

\begin{lemma}\label{lem:Lext} Assume {\upshape(A)}. The map 
$\ol{L}: \R^n \to \R$ is locally uniformly continuous,
and 
and there exists an event $\Om_1$ with $\bP(\Om_1) = 1$ such that, for all $R>0$ and $\om \in \Om_1$,
\begin{equation}
\label{eq:Lx}
\lim_{\rho \to \infty} \sup_{x\in B_R} \left|\frac{1}{\rho} \cL(\rho x, \rho, 0, 0,\om) - \ol{L}(x)\right| = 0.
\end{equation}
\end{lemma}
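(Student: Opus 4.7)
The plan is to combine the pointwise convergence from Theorem \ref{thm:sae} on a countable dense set of spatial points with the uniform-in-$\rho$ equicontinuity of the rescaled fundamental solution provided by Proposition \ref{prop:uc}, and then run a standard Arzel\`a--Ascoli type argument. Much of the analytical work is already done; the lemma is essentially a packaging result.

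\textbf{Step 1 (Equicontinuity from Proposition \ref{prop:uc}).} Fix $R>0$ and set $R' = \max(R,2)$. Applied with vertex $(y,s)=(0,0)$, the parameters $R'$ and $r = 1/2$, Proposition \ref{prop:uc} yields $\alpha \in (0,1)$ and $C=C(R)$ such that, for every $\ep \in (0,1)$, every $\om \in \Om$, and every $(x,t),(x',t') \in B_{R'}\times (1/2, R')$,
\begin{equation*}
\bigl| \cL^\ep(x,t,0,0,\om) - \cL^\ep(x',t',0,0,\om) \bigr| \le C \bigl( |x-x'|^\alpha + |t-t'|^\alpha \bigr).
\end{equation*}
Setting $\ep = 1/\rho$ with $\rho > 1$, recalling that $\cL^{1/\rho}(x,1,0,0,\om) = \rho^{-1}\cL(\rho x,\rho,0,0,\om)$, and fixing $t = t' = 1 \in (1/2, R')$, I obtain, for all $x,x' \in B_R$ and all $\rho > 1$,
\begin{equation*}
\left| \frac{1}{\rho} \cL(\rho x,\rho,0,0,\om) - \frac{1}{\rho} \cL(\rho x',\rho,0,0,\om) \right| \le C |x-x'|^\alpha.
\end{equation*}

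\textbf{Step 2 (Hölder regularity of $\ol{L}$ and extension).} Define $\Om_1 := \bigcap_{x \in \Q^n} \Om_x$, which has full measure by Theorem \ref{thm:sae}. For $x,x' \in \Q^n$, on $\Om_1$ I may pass to the limit $\rho \to \infty$ in the estimate of Step 1 to conclude $|\ol{L}(x)-\ol{L}(x')| \le C|x-x'|^\alpha$. Hence $\ol{L}$ is $\alpha$-Hölder continuous on $\Q^n \cap B_R$ with a deterministic constant, and therefore admits a unique continuous (in fact $\alpha$-Hölder) extension to all of $B_R$. Letting $R \to \infty$ gives a locally $\alpha$-Hölder (hence locally uniformly continuous) function on $\R^n$, which I still denote by $\ol{L}$.

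\textbf{Step 3 (Locally uniform convergence).} Fix $R>0$, $\om \in \Om_1$, and $\delta>0$. Choose $\eta>0$ with $3C\eta^\alpha < \delta$ and a finite set $F \subset \Q^n \cap B_R$ that is $\eta$-dense in $B_R$. By pointwise convergence at each point of $F$, there exists $\rho_0$ such that $|\rho^{-1}\cL(\rho y,\rho,0,0,\om) - \ol{L}(y)| < \delta/3$ for all $y \in F$ and all $\rho \ge \rho_0$. For arbitrary $x \in B_R$, picking $y \in F$ with $|x-y| < \eta$ and combining the triangle inequality with the Hölder bounds of Steps 1--2 yields
\begin{equation*}
\left| \frac{1}{\rho}\cL(\rho x,\rho,0,0,\om) - \ol{L}(x) \right| \le 2C\eta^\alpha + \frac{\delta}{3} < \delta,
\end{equation*}
proving \eqref{eq:Lx}. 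The main subtlety is that equicontinuity is available only in the first pair of variables, which is precisely why Step 1 fixes the base point at $(0,0)$; the argument nowhere requires continuity of $\cL^\ep$ in the vertex $(y,s)$, whose absence is the essential difficulty of the dynamic random setting.
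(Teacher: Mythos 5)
Your proof is correct and follows essentially the same route as the paper's: uniform-in-$\ep$ H\"older equicontinuity of $\cL^\ep(\cdot,1,0,0,\om)$ from Proposition \ref{prop:uc}, intersection of the full-measure events $\Om_x$ over $x\in\Q^n$, and a density/equicontinuity argument to upgrade pointwise to locally uniform convergence. The only cosmetic difference is that the paper obtains the H\"older bound on $\ol{L}$ directly for arbitrary $x,y$ by working on $\Om_x\cap\Om_y$, whereas you prove it on $\Q^n$ and extend; your extension does coincide with the $\ol{L}$ of Theorem \ref{thm:sae} since both are a.s.\ limits of the same quantity.
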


\begin{proof} For any $R > 0$. For all $x,y \in B_R$, in view of Theorem \ref{thm:sae} and Proposition \ref{prop:uc}, there exist  $\om \in \Om_x \cap \Om_y$ such  that
\begin{equation*}
\ol{L}(x) - \ol{L}(y) = \lim_{\ep \to 0} \left( \cL^\ep(x,1,0,0,\om) - \cL^\ep(y,1,0,0,\om)\right) \le C|x-y|^\alpha,
\end{equation*}
where the H\"older component and the bound $C$ only depend on $R$ and the parameters in (A). Since the estimate above still holds if $x$ and $y$ are switched, it follows  that $\ol{L}$ is local uniform continuous.

For each $z \in \Q^n$, let $\Om_z$ be the event of full measure defined in Theorem \ref{thm:sae}. Let  $\Om_1 := \bigcap_{z \in \Q^n} \Om_z \in \cF$ and observe that $\bP(\Om_1) = 1$. 

Fix $R > 0$ . For any $x \in B_R$, there exist $\{x_k\} \in \Q^n \cap B_{2R}$ such that $x_k \to x$ as $k \to \infty$. Note that, for all $\om \in \Om_1$,
\begin{equation*}
\begin{aligned}
\left| \cL^\ep(x,1,0,0,\om) - \ol{L}(x)\right| \le &\left| \cL^\ep(x,1,0,0,\om) - \cL^\ep(x_k,1,0,0,\om)\right|\\
+ &\left| \cL^\ep(x_k,1,0,0) - \ol{L}(x_k)\right| + \left|\ol{L}(x_k)- \ol{L}(x)\right|.%\\
\end{aligned}
\end{equation*}
Proposition \ref{prop:uc}, the fact that $\{x_k\}_{k\in\N} \cup \{x\} \subseteq B_{2R}$, and the local uniform continuity of $\ol{L}$ yield  that,  for all $\om \in \Om_1$, $\lim_{\ep \to 0} \cL^\ep(x,1,0,0,\om) = \ol{L}(x)$. It also follows from these facts that $\{\cL^\ep(\cdot,1,0,0,\om)\}_{\ep \in (0,1)}$ and $\ol{L}$ are equicontinuous on $B_{2R}$, and, hence, \eqref{eq:Lx} holds.
\end{proof}

%%%%%%%%%%%%%%%%
%% Homogeneity of the limit

%%%%%%%%%%%%%%%
% Proof of the uniform convergence of L

Next, we prove Theorem \ref{t.L.homog}. 
%which claims that the long time average of $\cL(\cdot,\cdot,\cdot,\cdot,\om)$ converges, as $\rho \to \infty$, locally uniformly with respect to all of its variables for almost all $\om \in \Om$. In particular, the vertex $(\rho y, \rho s)$ varies with $\rho$. 
The argument  follows as in \cite{AS2,AT2} from a combination of  Egoroff's and Birkhoff's ergodic theorems. We need, however, to extend the method to the setting of space-time random environment and, in particular, modify the reasoning so that is does not rely on uniform continuity with respect to the vertex.

\begin{proof}[Proof of Theorem \ref{t.L.homog}] {\itshape Step 1.} We first show that  \eqref{eq:Lx} to: for all $0 < r < R$ and $R \ge 1$,
\begin{equation}\label{e.unifLlim.1}
\bP\left[\lim_{\rho \to \infty} \sup_{(x,t) \in Q_{R,r,R}} \left| \frac{1}{\rho} \cL(\rho x, \rho t, 0,0,\om) - t\ol{L}\left(\frac{x}{t}\right) \right| = 0\right] = 1.
\end{equation}
%To this purpose, recall $\Om_1$ defined in Proposition \ref{lem:Lext}. 
Fix an $\om \in \Om_1$ and observe that
\begin{equation*}
\frac{1}{\rho} \cL(\rho x, \rho t, 0,0,\om) - t\ol{L}\left(\frac{x}{t}\right) = t\left[\frac{1}{\rho t} \cL\left(\frac{\rho t x}{t}, \rho t, 0,0,\om \right) - \ol{L}\left(\frac{x}{t}\right)\right].
\end{equation*}
Since $r \le t \le R$ and $|x| \le R$, we have $|x/t| \le R/r$, and
\begin{equation*}
\sup_{(x,t) \in Q_{R,r,R}} \left|\frac{1}{\rho t} \cL(\rho x, \rho t, 0,0,\om) - \ol{L}\left(\frac{x}{t}\right)\right| \le \sup_{\substack{r \le t \le R\\y \in B_{R/r}}} \left|\frac{1}{\rho t} \cL(\rho t y, \rho t, 0,0,\om) - \ol{L}(y)\right|.
%\, \sup_{y \in B_{R/r}} \left|\frac{1}{\rho t} \cL(\rho t y, \rho t, 0,0,\om) - \ol{L}(y)\right|.
\end{equation*}
In view of \eqref{eq:Lx}, for any $\delta > 0$, there exists $\rho_\del = \rho_\del(r,R,\om)> 0$ such that, if  $\rho' > \rho_\delta$, then 
\begin{equation*}
\sup_{y \in B_{R/r}} \left|\frac{1}{\rho'} \cL(\rho' y, \rho', 0,0,\om) - \ol{L}(y)\right| < \delta.
\end{equation*}
It follows that,  if $\rho > r^{-1} \rho_\del$, then  $\rho t > \rho_\delta$ for all $t \in [r,R]$ and, as a consequence,
\begin{equation*}
\sup_{r \le t \le R} \, \sup_{y \in B_{R/r}} \left|\frac{1}{\rho t} \cL(\rho t y, \rho t, 0,0,\om) - \ol{L}(y)\right| < \delta.
\end{equation*}
Combining the estimates above, yields  \eqref{e.unifLlim.1}.

\medskip

{\itshape Step 2}. We show that, for all $R > r > 0$ with $R \ge 1$, 
\begin{equation}\label{e.unifLlim.2}
\bP\left[\lim_{\rho \to \infty} \sup_{(y,s) \in Q_R} \sup_{(x,t)\in Q_{R,r,R}(y,s)} \left| \frac{1}{\rho} \cL(\rho x, \rho t, \rho y, \rho s,\om) - (t-s)\ol{L}\left(\frac{x-y}{t-s}\right) \right| = 0\right] = 1.
\end{equation}
Note that by choosing a sequence $R_k \uparrow \infty$, $r_k \downarrow 0$ and intersecting events of full measures, the above statement is equivalent to that of Theorem \ref{t.L.homog}. Hence, we only need to prove that 
\begin{equation}\label{eq:mv1}
\bP\left[\limsup_{\rho \to \infty} \sup_{(y,s) \in Q_R} \sup_{(x,t)\in Q_{R,r,R}(y,s)} \frac{1}{\rho} \cL(\rho x, \rho t, \rho y, \rho s,\om) - (t-s)\ol{L}\left(\frac{x-y}{t-s}\right) \le 0 \right] = 1,
\end{equation}
and
\begin{equation}\label{eq:mv2}
\bP\left[\liminf_{\rho \to \infty} \inf_{(y,s) \in Q_R} \inf_{(x,t)\in Q_{R,r,R}(y,s)} \frac{1}{\rho} \cL(\rho x, \rho t, \rho y, \rho s,\om) - (t-s)\ol{L}\left(\frac{x-y}{t-s}\right)  \ge 0\right] = 1.
\end{equation}

Observe that, in view of \eqref{e.unifLlim.1}, as $\rho \to \infty$  and for all $\om \in \Om_1$, 
\begin{equation*}
X_\rho(\om) := \sup_{(x,t) \in B_{R,r,R}} \left| \frac{1}{\rho} \cL(\rho x, \rho t, 0,0,\om) - t\ol{L}\left(\frac{x}{t}\right) \right| \to 0.
\end{equation*}
%In view of \eqref{e.unifLlim.1}, we find that $\lim_{\rho \to \infty} X_\rho = 0$ for all $\om \in \Om_1$. 
Then Egoroff's theorem yields, for any $0 < \ep <1$,  an event $\Om_\ep \subset \Om_1$ such that $\bP(\Om_\ep) \ge 1- \ep^{n+1}/8$ and
\begin{equation*}
\lim_{\rho \to \infty} \sup_{\om \in \Om_\ep} X_\rho(\om) = 0.
\end{equation*}
In particular, there exists $T_\ep > 0$ such that, for all $\rho > T_\ep$,
\begin{equation}\label{e.Egoroff}
\sup_{\om \in \Om_\ep} X_\rho(\om) < \frac{\ep}{2}.
\end{equation}
The ergodic theorem gives an event $\widetilde\Om_\ep$ such that $\bP(\widetilde \Om_\ep) = 1$ and for all $\om \in \widetilde\Om_\ep$,
\begin{equation*}
\lim_{K\to \infty} \frac{1}{\mathrm{Vol}(Q_K)} \int_{B_K} \int_{-K}^K \chi_{\Om_\ep} \left(\tau_{(y,s)} \om \right) ds dy = \bP(\Om_\ep) \ge 1- \frac{1}{8}\ep^{n+1}.
\end{equation*}
It follows that, for every $\om \in \widetilde\Om_\ep$, there exists $K_\ep(\om)$ such that if $K > K_\ep(\om)$,
\begin{equation*}
\mathrm{Vol } \left\{(y,s) \in Q_K \,:\, \tau_{(y,s)}\om \in \Om_\ep \right\} \ge \left(1-\frac{1}{4}\ep^{n+1}\right)\mathrm{Vol}(Q_K).
\end{equation*}

Let $\widetilde \Om_1$ be $\Om_1$,  for each $k \in \N$, $k \ge 2$, let $\widetilde\Om_{\frac 1 k}$ be defined as $\widetilde\Om_{\ep}$ with $\ep = {\frac 1 k}$,  set
%\begin{equation}
set $\widetilde\Om: = \cap_{k=1}^\infty \widetilde\Om_{\frac{1}{k}},$
%\end{equation}
and note $\widetilde\Om \in \cF$ and $\bP(\widetilde\Om) = 1$. 

Fix now an $\om \in \widetilde\Om$. For any $\ep > 0$ small, choose $k$ large such that ${\frac 1 k} < \frac{\ep}{2}$, and,  for $R \ge 1$ given, set $\rho_\ep(\om) = R^{-1} \max\{T_{1/k}, K_{1/k}(\om)\}$, and observe that if $\rho > \rho_\ep$, then $\rho R > \max\{T_{1/k},K_{1/k}\}$.

For each $(y,s) \in Q_R$, let $C^+_{\rho \ep R}(y,s)$ (and, respectively, $C^-_{\rho \ep R}(y,s)$) be the region bounded between the cylinder $Q_{\rho \ep R}(y,s)$ and the cone at $(y,s)$ with unit upward (and, respectively, downward) opening, that is
\begin{equation*}
\begin{aligned}
C^+_{\rho \ep R}(y,s) := Q_{\rho \ep R}(y,s) \cap \{(x,t) \,:\, t > s, \, |x - y|/(t-s) \le 1\},\\
C^-_{\rho \ep R}(y,s) := Q_{\rho \ep R}(y,s) \cap \{(x,t) \,:\, t < s, \, |x - y|/(s-t) \le 1\},
\end{aligned}
\end{equation*}
and  note that,  for $\ep$ small,
\begin{equation*}
 \mathrm{Vol} \left( Q_{\rho R} \cap C^\pm_{\ep \rho R} \right) \, \ge \, \frac{1}{8}\ep^{n+1} \, \mathrm{Vol}(Q_{\rho R}).
\end{equation*}
It follows that, for every $(y,s) \in Q_{R}$, there exists $(\hat y, \hat s) \in Q_{R}$ such that $(\rho \hat{y}, \rho \hat{s}) \in C^+_{\rho \ep R}(\rho y, \rho s)$ and $\tau_{(\rho \hat y, \rho \hat s)}\om \in \Om_{1/k}$. 

We observe that
\begin{equation}
\label{eq:mv3}
\frac{1}{\rho} \cL(\rho x, \rho t, 0, 0, \tau_{(\rho y, \rho s)}\om) - t\ol{L}\left(\frac{x}{t}\right) = \frac{1}{\rho} \cL(\rho x, \rho t, 0, 0, \tau_{(\rho \hat{y},\rho \hat{s})}\om) - t\ol{L}\left(\frac{x}{t}\right) + E_\rho,
\end{equation}
with
\begin{equation*}
E_\rho :=\, \frac{1}{\rho} \cL(\rho(x+y), \rho(t+s), \rho y, \rho s,\om) - \frac{1}{\rho} \cL(\rho(x+\hat{y}), \rho(t+\hat{s}), \rho \hat{y}, \rho \hat{s},\om),
\end{equation*}
which is the error term resulted from the change of vertices. Because $(\rho \hat{y}, \rho \hat{s}) \in \Om_{1/k}$, the difference of the first two terms on the right hand side of \eqref{eq:mv3} is bounded from above by $\frac{\ep}{2}$.

In view of \eqref{mono-t}, the error $|E_\rho|$ can be bounded by
\begin{equation*}
|E_\rho| \leq |E_\rho ^1|+ |E_\rho^2|,
\end{equation*}
where 
\begin{equation*}
E_\rho ^1:= \frac{1}{\rho} \cL(\rho(x+y), \rho(t+s), \rho y, \rho s,\om) - \frac{1}{\rho} \cL(\rho(x+\hat{y}), \rho(t+\hat{s}), \rho y, \rho s,\om), 
\end{equation*}
and 
\begin{equation*}
E_\rho^2:=\frac{1}{\rho} \cL(\rho \hat{y}, \rho \hat{s}, \rho y, \rho s,\om).
%\left|\frac{1}{\rho} \cL(\rho(x+y), \rho(t+s), \rho y, \rho s,\om) - \frac{1}{\rho} \cL(\rho(x+\hat{y}), \rho(t+\hat{s}), \rho y, \rho s,\om)\right| + \left|\frac{1}{\rho} \cL(\rho \hat{y}, \rho \hat{s}, \rho y, \rho s,\om)\right|.
\end{equation*}
Proposition \ref{prop:uc} yields that $|E_\rho ^1|=O(\ep^\alpha)$ for some exponent $\alpha$ depending on $R$, while   \eqref{eq:Lbdd} gives 
\begin{equation*}
|E_\rho^2| \le C\left( |s-\hat{s}| + \rho^{-\frac{\gam'}{2}} |s-\hat{s}|^{1-\frac{\gam'}{2}}\right) \le CR \ep,
\end{equation*}
provided that  $|y - \hat{y}|/|s-\hat{s}| \le 1$ and $|s - \hat{s}| \le \ep R$. 

In conclusion we have  that,  uniformly  for all $(y,s) \in Q_R$
\begin{equation*}
\frac{1}{\rho} \cL(\rho x, \rho t, 0, 0, \tau_{(\rho y, \rho s)}\om) - t\ol{L}\left(\frac{x}{t}\right) \le \frac{\ep}{2} + O(\ep^\alpha) + CR \ep, 
\end{equation*}
and, therefore, for all $\om \in \widetilde\Om$,
\begin{equation*}
\sup_{(y,s) \in Q_R} \, \sup_{(x,t) \in Q_{R,r,R}} \, \frac{1}{\rho} \cL(\rho x, \rho t, 0, 0, \tau_{(\rho y, \rho s)}\om) - t\ol{L}\left(\frac{x}{t}\right) \le \frac{\ep}{2} + \varrho(\ep R) + CR \ep.
\end{equation*}
Sending $\ep \to 0$, we obtain that
\begin{equation*}
\bP\left[\limsup_{\rho \to \infty} \sup_{(y,s) \in Q_R} \sup_{(x,t) \in Q_{R,r,R}} \frac{1}{\rho} \cL(\rho x, \rho t, 0,0,\tau_{(\rho y,\rho s)}\om) - t\ol{L}\left(\frac{x}{t}\right) \le 0\right] = 1.
\end{equation*}
In view of \eqref{e.L.stationary}, the statement above is equivalent to \eqref{eq:mv1}.

Similarly, by repeating the argument above, choosing $(\rho\hat{y},\rho\hat{s}) \in C^-_{\rho \ep R}(\rho y, \rho s)$ and $\tau_{(\rho \hat{y}, \rho \hat{s})} \om \in \Om_{1/k}$, we can bound the quantity in \eqref{eq:mv3} from below, and establish \eqref{eq:mv2}. %The proof of Theorem \ref{t.L.homog} is then complete.
\end{proof}

Finally, we note the following fact about $\ol{L}$ and $\ol{H}$ defined by \eqref{e.Hbar}. 

\begin{corollary} The functions $\ol{L} : \R^n \to \R$ and $\ol{H} : \R^n \to \R$ are convex.
\end{corollary}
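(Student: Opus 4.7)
The plan is to handle the two statements separately and in increasing order of difficulty. First, the convexity of $\overline H$ is essentially automatic. By its definition \eqref{e.Hbar}, the map $p \mapsto \overline H(p)$ is a pointwise supremum of affine functions $p \mapsto p\cdot v - \overline L(v)$, indexed by $v \in \R^n$, so it is convex (and lower semicontinuous) as a function of $p$. No random or PDE-level argument is required for $\overline H$.

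For $\overline L$, the plan is to derive the convexity inequality directly from the subadditivity property \eqref{mono-t} of the fundamental solution $\cL$ by passing to the long-time average via Theorem \ref{t.L.homog}. Fix $v_1, v_2 \in \R^n$ and $\lambda \in (0,1)$, set $v := \lambda v_1 + (1-\lambda) v_2$, and pick any $\omega \in \widetilde{\Omega}$ from Theorem \ref{t.L.homog}. Applied at the intermediate vertex $(\rho \lambda v_1, \rho \lambda)$ between $(0,0)$ and $(\rho v, \rho)$, the subadditivity inequality \eqref{mono-t} reads
\begin{equation*}
\cL(\rho v, \rho, 0, 0, \omega) \;\le\; \cL(\rho v, \rho, \rho \lambda v_1, \rho \lambda, \omega) \;+\; \cL(\rho \lambda v_1, \rho \lambda, 0, 0, \omega).
\end{equation*}
Divide by $\rho$ and pass to the limit $\rho \to \infty$. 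Choosing $R > \max\{|v|,|v_1|,|v_2|,1\}$ and $r < \min\{\lambda, 1-\lambda\}$, one checks that each of the three space–time configurations lies in the range to which Theorem \ref{t.L.homog} applies: the left-hand side converges to $\overline L(v)$, the second term on the right to $\lambda\, \overline L(v_1)$ (with vertex $(0,0)$ and endpoint $(\lambda v_1, \lambda)$, using $(t-s)\overline{L}((x-y)/(t-s)) = \lambda \overline L(v_1)$), and the first term on the right to $(1-\lambda)\, \overline L(v_2)$ (with vertex $(\lambda v_1, \lambda)$ and endpoint $(v,1)$, using $(x-y)/(t-s) = v_2$ and $t-s = 1-\lambda$). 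Combining these yields
\begin{equation*}
\overline L(\lambda v_1 + (1-\lambda) v_2) \;\le\; \lambda\, \overline L(v_1) + (1-\lambda)\, \overline L(v_2),
\end{equation*}
which is precisely convexity.

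The only technical point to verify is that Theorem \ref{t.L.homog}, stated with vertex varying in $Q_R$ and endpoint in the shifted cylinder $Q_{R,r,R}((y,s))$, indeed gives the three limits claimed on a common full-measure event $\widetilde{\Omega}$. This is immediate once $R$ and $r$ are fixed as above: all three pairs of points are deterministic, so the uniform convergence in Theorem \ref{t.L.homog} specializes to each of them simultaneously. This is the step I would expect to be the only mildly delicate one — in particular, using the vertex-uniform version \eqref{e.L.homog} (rather than only the fixed-vertex limit \eqref{e.Llim.pw}) is essential to handle the middle increment $\cL(\rho v, \rho, \rho \lambda v_1, \rho \lambda, \omega)$ without worrying about whether $\tau_{(\rho \lambda v_1, \rho \lambda)}\omega$ sits in the correct null-set exception.
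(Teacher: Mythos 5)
Your proposal is correct and is exactly the argument the paper has in mind (the paper merely asserts that convexity of $\ol{L}$ is "a straightforward consequence of Theorem \ref{t.L.homog}" and that $\ol{H}$ is convex as a Legendre transform): you combine the subadditivity \eqref{mono-t} at the intermediate vertex $(\rho\lambda v_1,\rho\lambda)$ with the vertex-uniform convergence \eqref{e.L.homog}, and your remark that the uniform-in-vertex statement is needed for the middle increment is well taken.
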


The convexity of $\ol{L}$ is a straightforward consequence of Theorem \ref{t.L.homog}. Finally, as the Legendre transform of a convex function, $\ol{H}$ is also convex.

%%%%%%%%%%%%%%%%%%%%%%%%%%%%%%%%%%%%%%%%%%%%%%%%%%%%%%%%%%%
\section{The proof of Theorem \ref{thm:appcell}}
\label{s.lim=H}

%In this section we prove Theorem \ref{thm:appcell}. 
According to the remarks at the end of Section \ref{sec:prelim}, this also completes the proof of the homogenization result of Theorem \ref{thm:stoch}.

For each $p \in \R^n$, let $w_\ep:= \ep w^\ep(\frac{\cdot}{\ep},\frac{\cdot}{\ep};\om,p)$, where $w^\ep$ is the solution to the approximate cell problem \eqref{eq:appcell}.   It follows  that %$w_\ep$ solves the equation
\begin{equation}
\label{e.acell}
w_\ep + \left(w_\ep\right)_t - \ep\tr\left(A\left(\frac{\cdot}{\ep},\frac{\cdot}{\ep},\om\right) D^2 w_\ep\right) + H\left(p+Dw_\ep,\frac{\cdot}{\ep},\frac{\cdot}{\ep},\om\right) = 0 \quad \text{ in } \R^n \times \R.
\end{equation}
Then, for any $R>0$, \eqref{eq:weplim} is equivalent to
\begin{equation}
\label{eq:weplim2}
\limsup_{\ep \to 0} \sup_{(x,t) \in Q_R} \left| w_\ep(x,t;p,\om) + \ol{H}(p)\right| = 0.
\end{equation}
%In the proof of Theorem \ref{thm:appcell} below, we will use $w_\ep$ instead of $w^\ep$ and establish \eqref{eq:weplim2}. 

For the proof of  Theorem \ref{thm:appcell} we need to recall some notions from convex analysis. We have seen that $\ol{H}$ is a convex function defined on $\R^n$. The epigraph of $\ol{H}$ is defined by 
\begin{equation*}
\epi(\ol{H}) = \left\{(p,s)\,:\, p \in \R^n \ \text{and} \ s \in [\ol{H}(p),\infty) \right\}.
\end{equation*}
Note that $\epi(\ol{H})$ is a closed convex subset of $\R^{n+1}$. Given a closed convex subset $D$ of $\R^k$, a point $p \in D$ is called an extreme point if, whenever $p = \lambda x + (1-\lambda)y$, $x,y \in D$ and $\lambda \in [0,1]$, then either  $x = p$ or $y=p$. A point $p \in D$ is called an exposed point, if there exists a linear functional $f: \R^k \to \R$ such that $f(p) > f(p')$ for all $p' \in D \setminus \{p\}$.

We denote by $\partial \ol{L}(q)$ the sub-differential of $\ol{L}$ at $q$. If $\partial \ol{L}(q)$ contains exactly one element, then $\ol{L}$ is differentiable at $q$ and the unique element is $D\ol{L}(q)$. The following classification of vectors $p \in \R^n$ will be useful in the proof of Theorem \ref{thm:appcell}. 

\begin{lemma} \label{lem:epi} Let $\ol{L}$ and $\ol{H}$ be defined by Theorem \ref{thm:sae} and \eqref{e.Hbar} respectively. Then 

\noindent{\upshape(i)} for all $p \in \R^n$, $(p,\ol{H}(p))$ is on the boundary of $\epi(\ol{H})$ and $p \in \partial \ol{L}(q)$ for some $q \in \R^n$,
and

\noindent{\upshape(ii)} if $(p,\ol{H}(p))$ is an exposed point of $\epi(\ol{H})$, then $p = D\ol{L}(q)$ for some $q \in \R^n$.
\end{lemma}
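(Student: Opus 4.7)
My plan is to reduce both statements to the Legendre--Fenchel duality between $\ol{L}$ and $\ol{H}$. Throughout I will rely on the subgradient identity $p \in \partial \ol{L}(q) \iff q \in \partial \ol{H}(p) \iff \ol{L}(q) + \ol{H}(p) = p \cdot q$, together with the involutivity $\ol{L} = \ol{H}^*$. Both are valid because $\ol{L}$ is convex (by the preceding Corollary), finite-valued on $\R^n$ by Lemma~\ref{l.L.bdd}, hence continuous, and because $\ol{H}$ is in turn finite and convex on $\R^n$ -- the latter requires the super-linearity of $\ol{L}$, which I address at the end.

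For part (i), the first assertion is purely topological: a finite convex function on $\R^n$ is continuous, its epigraph is closed, and its lower boundary is exactly the graph, so $(p, \ol{H}(p)) \in \partial \epi(\ol{H})$. For the second, since $\ol{H}$ is finite and convex on $\R^n$, $\partial \ol{H}(p) \ne \emptyset$ at every $p$; picking any $q \in \partial \ol{H}(p)$ and invoking the duality identity above yields $p \in \partial \ol{L}(q)$. For part (ii), I unpack the exposed-point hypothesis. Any exposing linear functional at $(p, \ol{H}(p)) \in \epi(\ol{H})$ must have a strictly negative $s$-coordinate (otherwise it would be unbounded above on $\epi(\ol{H})$), so after normalisation it takes the form $(x, s) \mapsto \alpha \cdot x - s$ for some $\alpha \in \R^n$. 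Restricting the exposing inequality to the graph shows that $p$ is the \emph{unique} maximiser of $x \mapsto \alpha \cdot x - \ol{H}(x)$. By $\ol{L} = \ol{H}^*$ this maximum value equals $\ol{L}(\alpha)$, and uniqueness of the maximiser translates precisely to $\partial \ol{L}(\alpha) = \{p\}$, forcing $\ol{L}$ to be differentiable at $q := \alpha$ with $D\ol{L}(q) = p$.

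I expect the main obstacle to be the finiteness of $\ol{H}$ on all of $\R^n$, equivalently the super-linearity of $\ol{L}$. Lemma~\ref{l.L.bdd} only gives $-C \le \ol{L}(v) \le C(|v|^{\gamma'} + 1)$, so a matching coercive lower bound $\ol{L}(v) \ge c|v|^{\gamma'} - C$ still has to be produced. I would obtain it either from the stochastic control representation of $\cL$ combined with the pointwise Lagrangian lower bound $L(v, x, t, \omega) \ge c|v|^{\gamma'} - C$ (a direct Young's inequality from the upper bound on $H$ in (A4)) together with a Jensen step on any admissible trajectory joining $0$ to $\rho x$ in time $\rho$, or, at the PDE level, by verifying that $\phi(x, t) := c|x|^{\gamma'} t^{1-\gamma'} - Ct$ is a viscosity subsolution of \eqref{eq:fundsol} (the algebraic identity $\gamma(\gamma' - 1) = \gamma'$ balances the $H$ and $\phi_t$ contributions, and the viscosity term is absorbed using $\gamma' < 2$) and then scaling as in the proof of Theorem~\ref{t.L.homog}.
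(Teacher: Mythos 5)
Your argument is correct and follows essentially the same route as the paper: both parts reduce to Legendre--Fenchel duality, with (i) coming from the nonemptiness of $\partial\ol{H}(p)$ for a finite convex function plus the subgradient inversion $q\in\partial\ol{H}(p)\iff p\in\partial\ol{L}(q)$, and (ii) from the characterization of exposed points of the epigraph of a conjugate function (the paper simply cites Rockafellar, Corollaries 23.5.1 and 25.1.2, whereas you reprove the second from scratch). The one place you go beyond the paper is in flagging that finiteness of $\ol{H}$ on all of $\R^n$ requires a superlinear lower bound on $\ol{L}$ --- the paper asserts ``the domain of $\ol{H}$ is $\R^n$'' without comment --- and your fix is sound; note that comparing $\cL(\cdot,\cdot,y,s,\om)$ with the affine subsolutions $(x,t)\mapsto p\cdot(x-y)-C(t-s)\left(|p|^\gamma+1\right)$ and then taking the supremum over $p$ yields $\ol{L}(v)\ge c|v|^{\gamma'}-C$ even more directly than the nonlinear subsolution you propose.
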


\begin{proof} The domain of $\ol{H}$ is $\R^n$ and, since  $\ol{H}$ is continuous and locally bounded, it follows that $\ol{H}$ is a closed proper convex function. The first claim of part (i) is obvious. Hence, there exists $q \in \R^n$ so that the function $x \mapsto x \cdot q - \ol{H}(x)$ achieves its supremum at $p$. It follows that $q \in \partial \ol{H}(p)$. Since $\ol{H}$ is a closed proper convex function, by \cite[Corollary 23.5.1]{Rockafellar}, $p \in \partial \ol{L}(q)$ also holds. Part (ii) follows directly from \cite[Corollary 25.1.2]{Rockafellar}.
\end{proof}

%Now we present the proof of Theorem \ref{thm:appcell} in detail.

\begin{proof}[Proof of Theorem \ref{thm:appcell}] {\it Step 1}: We prove that for any fixed $\om \in \widetilde\Om$, $p \in \R^n$ and $R \ge 1$,
\begin{equation}
\label{e.limsup<H}
\limsup_{\ep \to 0} \sup_{(x,t) \in Q_R} \left( w_\ep(x,t;p) + \ol{H}(p) \right) \le 0.
\end{equation}
Lemma \ref{lem:epi} (i) yields a $q \in \R^n$ such that $p \in \partial \ol{L}(q)$. This implies
\begin{equation}
\label{e.subgradientL}
\ol{H}(p) + \ol{L}(q) - p\cdot q = 0.
\end{equation}

Arguing by contradiction, we assume \eqref{e.limsup<H} fails, so there exist $\del > 0$, a subsequence  $\ep_k \to 0$, % $\{\ep_k\}_{k\in \N}$ converging to $0$, 
and a sequence $\{(z_k,s_k)\}_{k\in\N} \in Q_R$ such that
\begin{equation*}
w_{\ep_k}(z_k,s_k) + \ol{H}(p) \ge \del > 0.
\end{equation*}
For notational simplicity, the subscript $k$ in $\ep_k$ and in $(z_k,s_k)$ is henceforth suppressed. Since $\om$ and $p$ are also fixed, any dependence on these parameters is also suppressed. 

Next, for some small real number $c>0$ and some $\lam \in (0,1)$ close to $1$ to be chosen and  $(x,t) \in \R^n \times (-\infty,s)$, we define
\begin{equation*}
W^\ep(x,t) := \lam \left(w_\ep(x,t) - w_\ep(z,s)\right) - c\delta \psi(x) -c\del(s - t) ,
\end{equation*}
where 
\begin{equation*}
\psi(x):=\left((1+|x-z|^2)^{\frac 1 2} -1\right); % -c\del(s - t).
\end{equation*}
note that 
\begin{equation*}
|D\psi(x)| < 1 \quad \text{and} \quad (1+|x|^2)^{-\frac 3 2} Id \le D^2 \psi(x) \le (1+|x|^2)^{-\frac 1 2} Id.
\end{equation*}
Let $U_\ep := \{(x,t) \in \R^n \times \R \,:\, W^\ep \ge -\frac{\del}{4}\} \cap \{(x,t) \in \R^n \times \R \,:\, t \le s\}$. It follows that 
\begin{equation}
\label{e.W.sub}
W^\ep_t - \ep\tr \left(A\left(\frac{x}{\ep},\frac{t}{\ep}\right) D^2 W^\ep\right) + H\left(p+DW^\ep,\frac{x}{\ep},\frac{t}{\ep}\right) \le \ol{H}(p) - \frac{\del}{4} \quad \text{in } U_\ep.
\end{equation}
Indeed, if $\varphi \in C^2(\R^n\times \R)$ and if $W^\ep - \varphi$ attains a local maximum at $(x_0,t_0)$ in $U_\ep$, then the mapping
\begin{equation*}
(x,t) \mapsto w_\ep(x,t) - \lam^{-1} (\varphi(x,t) + c\delta \psi(x-z) + c\del(s-t))
\end{equation*}
attains a local maximum at $(x_0,t_0)$.

Since $w_\ep$ is the viscosity solution of \eqref{e.acell}, we find
\begin{equation*}
\begin{aligned}
w_\ep(x_0,t_0) + \lam^{-1} \left(\varphi_t(x_0,t_0) - c\del\right) - \lam^{-1} \ep \tr\left(A\left(\frac{x_0}{\ep},\frac{t_0}{\ep}\right) (D^2 \varphi(x_0,t_0) + c\delta D^2 \psi(x_0)\right)\\
 + H\left(p + \lam^{-1} (D\varphi(x_0,t_0) + c\del D\psi(x_0)),\frac{x_0}{\ep},\frac{t_0}{\ep}\right) \le 0,
\end{aligned}
\end{equation*}
while the  convexity of $H$ in $p$ gives
\begin{equation*}
\begin{aligned}
&\ H\left(p+ D\varphi(x_0,t_0),\frac{x_0}{\ep},\frac{t_0}{\ep}\right)\\
= &\ H\Big(\lam\Big(p+\frac{D\varphi(x_0,t_0) + c\delta D\psi(x_0)}{\lam}\Big)+(1-\lam)\Big(p-\frac{c\del D\psi(x_0)}{1-\lam}\Big),\frac{x_0}{\ep},\frac{t_0}{\ep}\Big)\\
\le &\ \lam H\Big(p+\frac{D\varphi(x_0,t_0) + c\delta D\psi(x_0)}{\lam},\frac{x_0}{\ep},\frac{t_0}{\ep}\Big) + (1-\lam) H\Big(p-\frac{c\del D\psi(x_0)}{1-\lam},\frac{x_0}{\ep},\frac{t_0}{\ep}\Big).
\end{aligned}
\end{equation*}
We use the growth assumption $\eqref{A2}$ to choose $\lam(p) \in (0,1]$ so that $1 - \lam(p)$ is small and
\begin{equation*}
-\lam \del + \lam \ol{H}(p) + (1-\lam) \sup_{p' \in B_1(p)} \sup_{(x,t) \in \R^{n} \times \R} H(p',x,t) \le \ol{H}(p) - \frac{3\del}{4}, 
\end{equation*}
%This is possible in view of \eqref{A2}. 
and we fix a small enough  $c>0$ so that $c < 1/8$ and $c\del<1-\lam$. Then, for all $(x,t) \in \R^n \times \R$,
\begin{equation*}
p - c\del(1-\lam)^{-1} D\psi(x) \in B_1(p) \quad\text{and}\quad |\tr(A(x,t) c\del D^2 \psi(x))| < \frac{\del}{16}.
\end{equation*}
Combining the estimates above, we get, for $\ep$ sufficiently small,
\begin{equation}
\label{eq:ineqH}
\begin{aligned}
&\varphi_t(x_0,t_0) - \ep\tr\left(A\left(\frac{x_0}{\ep},\frac{t_0}{\ep}\right) D^2 \varphi(x_0,t_0)\right) + H\left(p+D\varphi(x_0,t_0),\frac{x_0}{\ep},\frac{t_0}{\ep}\right)\\
\le\, &-\lam w_\ep(x_0,t_0) + c\del + \ep c\del \tr\left(A\left(\frac{x_0}{\ep},\frac{t_0}{\ep}\right)D^2\psi(x_0)\right) + H\left(p+D\varphi(x_0,t_0),\frac{x_0}{\ep},\frac{t_0}{\ep}\right) \\
& \quad \quad \quad \quad \quad - \lam H\left(p+\frac{D\varphi(x_0,t_0) + c\delta D\psi(x_0)}{\lam},\frac{x_0}{\ep},\frac{t_0}{\ep}\right)\\
\le\, &-W^\ep(x_0,t_0) - \lam w_\ep(z,s) + (1-\lam) H\left(p-\frac{c\del D\psi(x_0)}{1-\lam},\frac{x_0}{\ep},\frac{t_0}{\ep}\right) + \frac{\del}{4}\\
\le\, &-W^\ep(x_0,t_0) - \lam \del + \lam \ol{H}(p) + (1-\lam) \sup_{p' \in B_1(p)} \|H(p',\cdot,\cdot)\|_{L^\infty} + \frac{\del}{4}\\
\le\, &-W^\ep(x_0,t_0) + \ol{H}(p) - \frac{\del}{2} \le\, \ol{H}(p) - \frac{\del}{2}, 
\end{aligned}
\end{equation}
with the last inequality holding  because $(x_0,t_0) \in U_\ep$ and, hence, $-W^\ep(x_0,t_0) \le \frac{\del}{4}$. This proves \eqref{e.W.sub}.

Next we compare $W^\ep$ with $V^\ep := V^\ep(x,t)$ which is defined, for some large $r>0$ to be chosen, by
\begin{equation}
\label{e.V.def}
V^\ep(x,t) := \cL^\ep(x,t,z-rq,s-r) - \cL^\ep(z,s,z-rq,s-r) - p \cdot (x-z) + \ol{H}(p) (t-s).
\end{equation}
In view of \eqref{eq:cLe}, $V^\ep$ satisfies
\begin{equation*}
V^\ep_t - \ep\tr\left(A\left(\frac{x}{\ep},\frac{t}{\ep}\right)D^2 V^\ep\right) + H\left(p+DV^\ep,\frac{x}{\ep},\frac{t}{\ep}\right) = \ol{H}(p) \quad \text{ in } \R^n \times (-r+s,\infty).
\end{equation*}

Let $\partial_s U_\ep := \{t<s\}\cap \partial \{W^\ep \ge - \frac{\del}{4}\}$ be the parabolic boundary of the space-time domain $U_\ep$ and note that $W^\ep = - \frac{\del}{4}$ on $\partial_s U_\ep$. 

The comparison principle for \eqref{e.W.sub}, yields 
\begin{equation}
\label{eq:comp1}
\sup_{U_\ep} \left( W^\ep - V^\ep\right) = \sup_{\partial_s U_\ep} \left(W^\ep - V^\ep\right) = -\frac{\del}{4} - \inf_{\partial_s U_\ep} V^\ep,
\end{equation}
and, since $(z,s) \in U_\ep \cap \{t = s\}$ is an interior point of $U_\ep$ and $W^\ep(z,s) = V^\ep(z,s) = 0$, the left hand side is nonnegative. 

In view of the bound $|w_\ep| \le C$ and the linear growth of $\psi(x)+(s-t)$, we find that $U_\ep \subset Q_{R'}(z,s)$ provided $R' = 2C/c\del$. It follows that
\begin{equation*}
\begin{aligned}
\inf_{(x,t) \in Q_{R'}(z,s)}  \Big(\cL^\ep(x,t,z-rq,s-r) &- \cL^\ep(z,t,z-rq,s-r) \\
&- p\cdot(x-z) + \ol{H}(p)(t-s)\Big) \le  -\frac{\del}{4}.
\end{aligned}
\end{equation*}

Send $\ep \to 0$. Since $\{(z_j,s_j)\} \subset Q_R$, we may assume that $(z,s) \to (z_0,s_0)$. By Theorem \ref{t.L.homog}, $\cL^\ep$ converges uniformly on $Q_{R'}(z,s)$. We get
\begin{equation*}
\begin{aligned}
\inf_{(x,t) \in Q_{R'+1}(z_0,s_0)} \Big( (r-s_0+t)\ol{L}\left(\frac{x-z_0+rq}{r-s_0+t}\right) &- r\ol{L}(q) \\
- p\cdot(x-z_0) &+ \ol{H}(p)(t-s_0) \Big) \le -\frac{\del}{4}.
\end{aligned}
\end{equation*}
The fact $p \in \partial \ol{L}(q)$ implies
\begin{equation}
\label{e.subgradientL2}
\ol{L}\left(\frac{x-z_0+rq}{r-s_0+t}\right) - \ol{L}(q) \ge p \cdot \left(\frac{x-z_0+rq}{r-s_0+t} - q\right) = p \cdot \frac{(x-z_0) + (s_0-t)q}{r-s_0+t}.
\end{equation}
As a result, for $r$ sufficiently large, we have
\begin{equation*}
\inf_{(x,z) \in Q_{R'+1}(z_0,s_0)} \left( (t-s_0)\left(\ol{H}(p) + \ol{L}(q) - p\cdot q\right) \right) \le -\frac{\del}{4},
\end{equation*}
which, combined with \eqref{e.subgradientL}, yields $0 \le -\del/4$. This is a contradiction and, hence, \eqref{e.limsup<H} must hold.
%%%%%%%%%%%
\bigskip

{\it Step 2}: For any fixed $\om \in \widetilde\Om$, $p \in \R^n$ and $R \ge 1$,
\begin{equation}
\label{e.liminf>H}
\liminf_{\ep \to 0} \inf_{(x,t) \in Q_R} w_\ep(x,t;p) + \ol{H}(p) \ge 0.
\end{equation}
We claim that this task can be reduced to the case of $(p,\ol{H}(p))$ being an exposed point of $\epi(\ol{H})$.

Indeed, assume that \eqref{e.liminf>H} holds for all exposed $(p,\ol{H}(p))$. Then if $p \in \R^n$ is such that $(p,\ol{H}(p))$ is an extreme point of $\epi(\ol{H})$, then by Straszewicz's theorem \cite[Theorem 18.6]{Rockafellar}, there exists a sequence $\{p_j\}$ converging to $p$, such that $\{(p_j,\ol{H}(p_j))\}$ are exposed points of  $\epi(\ol{H})$. In view of the continuity of the mapping $p \mapsto w^\ep(\cdot,\cdot,p)$, \eqref{e.liminf>H} holds for extremal $(p,\ol{H}(p))$.

For any other $p \in \R^n$, $(p,\ol{H}(p))$ can be written as a convex combination of extremal $\{(p_j,\ol{H}(p_j))\}_{j=1}^{n+2}$. We have proved that \eqref{e.liminf>H} holds for each $p_j$. Since the mapping $p \mapsto w^\ep(\cdot,\cdot,p)$ is concave, and $p$ is a convex combination of $\{p_j\}_{j=1}^{n+2}$, we conclude that \eqref{e.liminf>H} holds for $p$.

{\it Step 3}: If $p \in \R^n$ and if $(p,\ol{H}(p))$ is an exposed point of $\epi(\ol{H})$,  then  \eqref{e.liminf>H} holds.
% for $p \in \R^n$ if $(p,\ol{H}(p))$ is an exposed point of $\epi(\ol{H})$.
Although the proof of \eqref{e.liminf>H} follows along the lines of Step 1, there is an important difference. The inequality \eqref{e.subgradientL}, which holds for any $p \in \partial \ol{L}(q)$, is useful only to establish the upper bound as seen in Step 1. Here, however, $p$ satisfies the additional condition that $(p,\ol{H}(p))$ is exposed, and, hence, in view of  Lemma \ref{lem:epi}, $p = D\ol{L}(q)$ for some $q \in \R^n$. This amounts to
\begin{equation}
\label{e.gradientL}
\ol{L}(y) - \ol{L}(q) = p \cdot (y-q) + o(|y-q|),
\end{equation}
which is a stronger fact than \eqref{e.subgradientL}.

Arguing by contradiction, we assume that \eqref{e.liminf>H} fails, so there exists $\del > 0$, a subsequence $\{\ep_k\}_{k\in\N}$ converging to $0$, a sequence $\{(z_k,s_k)\}_{k\in\N} \subseteq Q_R$ such that
\begin{equation*}
-w_{\ep_k}(z_k,s_k) - \ol{H}(p) \ge \del > 0;
\end{equation*}
as before, the subscript $k$ is suppressed henceforth. 

Using  \eqref{A2}, we take  $\lam>1$ such that
\begin{equation*}
\lam \del + \lam \ol{H}(p) + (\lam-1) \inf_{p' \in B_1(p)} \inf_{(x,t) \in \R^{n} \times \R} H(p',x,t) \ge \ol{H}(p) + \frac{3\del}{4}.
\end{equation*}
After $\lam$ is fixed, we choose $0 < c < \frac{1}{8}$ so that $c\del < \lam-1$, and for $x \in \R^n$ and $t \le s$, we define
\begin{equation*}
W^\ep(x,t) := \lam \left(w_\ep(x,t) - w_\ep(z,s)\right) + c\del \left((1+|x-z|^2)^{\frac 1 2} -1\right) + c\del(s - t),
\end{equation*}
and set $U_\ep := \{W^\ep \le \frac{\del}{4}\} \cap \{t\le s\}$. 

We claim that 
\begin{equation}
\label{e.W.sup}
W^\ep_t - \ep\tr \left(A\left(\frac{x}{\ep},\frac{t}{\ep}\right) D^2 W^\ep\right) + H\left(p+DW^\ep,\frac{x}{\ep},\frac{t}{\ep}\right) \ge \ol{H}(p) + \frac{\del}{4} \quad \text{in } U_\ep.
\end{equation}
This can be proved by the same argument that led to \eqref{e.W.sub}, provided we replace \eqref{eq:ineqH} by
\begin{equation*}
\begin{aligned}
H\left(p+D \varphi(x_0,t_0),\frac{x_0}{\ep},\frac{t_0}{\ep}\right) \,-\, &\lam H\left(p+\frac{D\varphi(x_0,t_0)-c\del D\psi(x_0)}{\lam},\frac{x_0}{\ep},\frac{t_0}{\ep}\right) \\
\ge &\, (\lam-1) H\left(p- \frac{c\del D \psi(x_0)}{\lam-1},\frac{x_0}{\ep},\frac{t_0}{\ep}\right).
\end{aligned}
\end{equation*}

Then we compare $W^\ep$ with the function $V^\ep$ defined by \eqref{e.V.def} on the domain $U_\ep$, and get
\begin{equation*}
\sup_{U_\ep} \left(V^\ep - W^\ep\right) = \sup_{\partial_s U_\ep} \left(V^\ep - W^\ep\right) = -\frac{\del}{4} + \sup_{\partial_s U_\ep} V^\ep,
\end{equation*}
The left hand side is non-negative since $V^\ep(z,s) = W^\ep(z,s) = 0$ and $(z,s)$ is an interior point of $U_\ep$. Moreover, if $R' = 2\|w_\ep\|_{L^\infty}/c\delta$, then $U_\ep \subset Q_{R'}(z,s)$, and, hence
\begin{equation*}
\begin{aligned}
\sup_{Q_{R'}(z,s)} \Big(\cL^\ep(x,t,z-rq,s - r) &- \cL^\ep(z,t,z-rq,s-r)\\
&- p\cdot(x-z) + \ol{H}(p)(t-s)\Big) \ge \frac{\del}{4}.
\end{aligned}
\end{equation*}
As in Step 1, we may assume $(z_k,s_k) \to (z_0,s_0) \in \ol{Q}_R$. Sending $\ep_k$ to $0$, we get
\begin{equation}
\label{e.liminf>H1}
\begin{aligned}
\sup_{(x,t) \in Q_{R'+1}(z_0,s_0)} \Big( (r-s_0+t)\ol{L}\left(\frac{x-z_0+rq}{r-s_0+t}\right) &- r\ol{L}(q)  \\
- p\cdot(x-z_0) &+ \ol{H}(p)(t-s_0) \Big) \ge \frac{\del}{4}.
\end{aligned}
\end{equation}

Using  that $p = D\ol{L}(q)$ and $\ol{L}(q) + \ol{H}(p) = p\cdot q$, we have
\begin{equation*}
\begin{aligned}
&\,(r-s_0+t)\ol{L}\left(\frac{x-z_0+rq}{r-s_0+t}\right) - r\ol{L}(q)  - p\cdot(x-z_0) + \ol{H}(p)(t-s_0)\\
= &\,(r-s_0+t) \left[ \ol{L}\left(\frac{x-z_0+rq}{r-s_0+t}\right) - \ol{L}(q) - p\cdot \frac{x-z_0 +rq -(r - s_0+t)q}{r-s_0+t} \right]\\
= &\, (r-s_0+t) \cdot o\left(\left|\frac{x-z_0+(s_0-t)q}{r-s_0+t}\right|\right)
\end{aligned}
\end{equation*}
Since $|x-z_0 + (s_0-t)q| \le (1+|q|)R$ is finite and the estimate \eqref{e.liminf>H1} holds for all large $r$, sending $r \to \infty$,  yields $\frac{\del}{4} \le 0$, which is a contradiction. %The claim in Step 3 and, hence, the conclusion of Theorem \ref{thm:appcell} are proved.
\end{proof}

%%%%%%%%%%%%%%%%%%%%%%%%%%%%%%%%%%%%%%%%%%%%%%%%%%%%%%%%%%%%%%%%

\section{Some Formulae for the Effective Hamiltonian}
\label{sec:formulae}

Arguments similar to the ones in \cite{LS2} yield that, once homogenization theory is established, the effective Hamiltonian $\ol{H}(p)$ 
is given by 
\begin{equation*}
\ol{H}(p) = \inf_{\psi \in \cS} \sup_{(x,t) \in \R^{n+1}} \left[  \psi_t - \tr\left( A(x,t) D^2 \psi(x,t) \right) + H(p+D\psi(x,t),x,t)\right],
\end{equation*}
where  the sup of the value of the differential operator evaluated on $\psi$ should be interpreted in the viscosity sense, and 
%can be shown to satisfy an inf-sup formula. Let
\begin{equation*}
\begin{aligned}
\cS := \Big\{ \psi: \R^{n+1} \times \Om \to \R ~:~ \psi(\cdot,\cdot,\om) \in C(\R^{n+1}), \\
\, \lim_{|(x,t)| \to \infty} \frac{|\psi(x,t,\om)|}{|(x,t)|} = 0 \text{ for a.s. } \om \in \Om,\\
\psi(x+y,t+s,\om) - \psi(x,t,\om) \text{ is stationary in } (y,s) \text{ for all } (x,t) \in \R^{n+1}\Big\}.
\end{aligned}
\end{equation*}
that is, $\cS$ is the set of random processes that are sublinear in $(x,t)$ and have stationary increments. Note that if $\psi \in \cS$ is also differentiable with respect to $(x,t)$, then the stationarity of increments is equivalent to $\psi_t$ and $D\psi$ being stationary, and the sublinearity is equivalent to $\E [\psi_t] = 0$ and $\E [D\psi] = 0$. %Following the argument in \cite{LS2}, we verify that $\ol{H}$ defined by \eqref{e.Hbar} satisfies
%\begin{equation*}
%\ol{H}(p) = \inf_{\psi \in \cS} \sup_{(x,t) \in \R^{n+1}} \left[  \psi_t - \tr\left( A(x,t) D^2 \psi(x,t) \right) + H(p+D\psi(x,t),x,t)\right],
%\end{equation*}
%where the sup of the value of the differential operator evaluated on $\psi$ should be interpreted in the viscosity sense. This inf-sup formula for the effective Hamiltonian is an extension of Proposition 3.2 of \cite{LS2} to the setting of space-time random environment.

Another formula for effective Hamiltonian was introduced in \cite{KRV} for time homogeneous random environment, and then generalized in \cite{KV} to space-time random environment, both under the assumption that the diffusion term is given by the identity matrix. We recall how this formula was obtained, and write it in the form that it should take when the diffusion matrix is more general. 

Any random variable $\widetilde{b}$ gives rise to a stationary random process $b(x,t,\om) = \widetilde{b}(\tau_{(x,t)}\om)$. In the reverse direction, for any stationary random process $b(x,t,\om)$, we can lift it to the probability space and identify it with $\widetilde b(\om) := b(0,0,\om)$. For notational simplicity, we omit the tilde in $\widetilde{b}$ from now on. The translation group $\{\tau_{(x,t)}: (x,t) \in \R^{n+1}\}$ acting on $L^2(\Om)$ are isometric. Let $\partial_t, D_i$, $i = 1,2,\cdots,n$, by an abuse of notations, be the corresponding infinitesimal generators; we denote further $D = (D_1,\cdots,D_n)$. 

Let $\bB := L^\infty(\Om,\R^n)$ be the space of essentially bounded maps from $\Om$ to $\R^n$. Given any $b \in \bB$ and $A = \sigma \sigma^T$ satisfying (A1), (A2) and (A3), let $x(t,\om)$ be the diffusion process starting from $0$ at time $0$ such that
\begin{equation*}
dx(t) = b(\tau_{(x(t),-t)}\om) dt + \sqrt{2} \sigma(x(t),-t) dB_t \quad \text{ for all } t > 0.
\end{equation*}
In the above, $(B_t)_{t\ge 0}$ is a standard $m$-dimensional Brownian motion, independent of $H$ and $\sigma$. This  process can be viewed as a diffusion  in the probability space as follows. Pick a starting point $\omega \in \Omega$, and define the walk $\omega(t) = \tau_{(x(t,\om),-t)} \omega$, $t\ge 0$. This is a Markov process on $\Omega$ with generator
\begin{equation}
    \mathcal{L}_{b,\sigma} = -\partial_t + \tr(\sigma(\omega)\sigma(\omega)^T D^2) + b(\omega) \cdot D.
    \label{e.pdiff}
\end{equation}

Let $\bD := \{\Phi \in L^\infty(\Om;\R) \,:\, \E[\Phi] = 1, \Phi > 0 \text{ and } (\partial_t \Phi, D\Phi) \in L^\infty\}$. Finally, let
\begin{equation}
  \cE := \left\{(b,\Phi) \in \bB \times \bD \;:\; \partial_t \Phi + D^2_{ij}(A\Phi) - \Div (b\Phi) = 0\right\},
  \label{e.invmeas}
\end{equation}
where the equation should be understood in the weak sense, that is for all $G \in C^\infty_0(\R^{n+1},\R)$,
\begin{equation*}
\begin{aligned}
	\int_{\R} \int_{\R^n} \big[\partial_t G(x,t) &+ \langle -b + \Div A, DG(x,t)\rangle\big] \Phi (\tau_{(t,x)}\omega) \\
	& + \langle A D\Phi(\tau_{(t,x)}\omega), DG(x,t)\rangle \ dx dt = 0.
\end{aligned}
\end{equation*}
Hence, $\cE$ consists of all pairs $(b,\Phi)$ such that $\Phi$ is the density of an invariant measure of the Markov process $\mathcal{L}_{b,\sigma}$. We note that for any $v \in \R^n$, the pair $(b,\Phi)$, where $b_j = v_j + D_i A_{ij}$ and $\Phi \equiv 1$, satisfies the equation above and, hence, $\cE$ is nonempty. 

Following \cite{KRV,KV}, the effective Hamiltonian, for each $p \in \R^n$, should be given by
\begin{equation}
	\widetilde{H}(p) = \sup_{(b,\Phi)\in \cE} \E \left[\left( \langle -b, p\rangle - L(-b(\omega),\omega)\right) \Phi(\omega) \right].\\
  \label{e.H_KRV}
\end{equation}
Note that in this formula, $A$ does not need to be uniformly elliptic and can be degenerate.

\medskip

As a corollary of Theorem \ref{thm:stoch}, we can show that the above formulae for effective Hamiltonian holds in the setting of this paper. 

\begin{theorem} Assume {\upshape(A)} so that Theorem \ref{thm:stoch} holds. Then, for all $p \in \R^n$, $\ol{H}(p) = \widetilde H(p)$. 
\end{theorem}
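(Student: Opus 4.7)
The plan is to prove $\ol{H}(p) \ge \widetilde{H}(p)$ and $\ol{H}(p) \le \widetilde{H}(p)$ separately. The first inequality will follow from the stochastic control representation of the fundamental solution combined with the ergodic theorem for the lifted Markov process on $\Omega$, while the second is a convex-duality argument that identifies the KRV formula as the Fenchel dual of the corrector formula stated at the start of this section.

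\textbf{The inequality $\ol{H}(p) \ge \widetilde{H}(p)$.} First I would fix $(b, \Phi) \in \cE$ and run the diffusion
\begin{equation*}
dX_t = b(\tau_{(X_t,-t)}\omega)\,dt + \sqrt{2}\,\sigma(X_t,-t,\omega)\,dB_t, \qquad X_0 = 0.
\end{equation*}
By \eqref{e.invmeas}, $\Phi\,d\bP$ is invariant for the environment-valued Markov process $\omega(t) = \tau_{(X_t,-t)}\omega$ with generator $\mathcal{L}_{b,\sigma}$. Ergodicity (from (P2)) and Birkhoff's theorem then yield, $\bP$-a.s.,
\begin{equation*}
\frac{X_t}{t} \longrightarrow \E[b\,\Phi]
\qquad\text{and}\qquad
\frac{1}{t}\int_0^t L(-b(\omega(s)),\omega(s))\,ds \longrightarrow \E[L(-b,\cdot)\,\Phi].
\end{equation*}
The stochastic control representation of $\cL$, applied with the candidate drift associated with $b$, provides an upper bound of the form $\cL(X_t, t, 0, 0, \omega) \le \int_0^t L(-b(\omega(s)),\omega(s))\,ds + (\text{martingale})$. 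Dividing by $t$, invoking Theorem \ref{t.L.homog} at the asymptotically deterministic point $(X_t/t,\,1)$, and letting $t \to \infty$ would give $\ol{L}(v_*) \le \E[L(-b,\cdot)\Phi]$ for the appropriate $v_* = \pm\E[b\Phi]$. Inserting this into $\ol{H}(p) = \sup_v [p\cdot v - \ol{L}(v)]$ and supremizing over $\cE$ yields the claim.

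\textbf{The inequality $\ol{H}(p) \le \widetilde{H}(p)$.} Here I would use the corrector formula at the start of the section and apply Fenchel duality in $\psi$. For each $\omega$, the functional
\begin{equation*}
F_p(\psi)(\omega) := \sup_{(x,t) \in \R^{n+1}} \bigl[\psi_t - \tr(A(x,t,\omega)D^2\psi) + H(p+D\psi,x,t,\omega)\bigr]
\end{equation*}
is convex in $\psi$ because $H$ is convex in its first variable. Its Legendre conjugate, paired against $(b,\Phi) \in \bB \times \bD$ with $\Phi \ge 0$ and $\E\Phi = 1$, should compute to $\E[(\langle p,-b\rangle - L(-b,\cdot))\Phi]$ via the pointwise identity $H(q,\cdot) + L(v,\cdot) \ge q\cdot v$. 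The stationary-increments constraint on $\psi$, combined with integration by parts on $\Omega$ using the infinitesimal generators $\partial_t$ and $D_i$, forces $(b,\Phi)$ to satisfy the adjoint relation \eqref{e.invmeas} and hence to lie in $\cE$. A min-max interchange on a dense subclass of smooth stationary test processes $\psi$ then delivers the desired inequality.

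\textbf{The main obstacle.} Making the Fenchel interchange fully rigorous is the delicate step, because $L$ is unbounded, the class $\cS$ imposes only asymptotic sublinearity, and $\cE$ is a weak-PDE-constrained subset of $\bB \times \bD$ on which one must verify the hypotheses of a Sion-type min-max theorem. To avoid the abstract duality, I would alternatively follow a stochastic-control route in the spirit of \cite{KV}: use Theorem \ref{thm:appcell} to approximate $-\ol{H}(p)$ by $\ep w^\ep(0,0,\omega)$, extract near-optimal drifts $b^\ep$ from the control representation of $w^\ep$, form the corresponding empirical occupation densities $\Phi^\ep$ on $\Omega$, and pass to the limit using tightness together with lower semicontinuity of $(b,\Phi)\mapsto\E[L(-b,\cdot)\Phi]$; the translation invariance of $\bP$ and the stationarity built into \eqref{eq:appcell} then guarantee that the limit pair lies in $\cE$ and realizes $\ol{H}(p)$ in the KRV functional.
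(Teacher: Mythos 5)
Your second inequality ($\ol{H}\le\widetilde{H}$) is essentially the paper's: the authors obtain it from the existence, for each $\del>0$, of an approximate corrector $\psi_\del\in\cS$ subsolving the cell equation with right-hand side $\widetilde{H}(p)+\del$, which is exactly the output of the min--max duality you describe (they cite the construction in \cite{KV} and note it extends to general $A$). Your honest flagging of the Sion interchange as the delicate point is consistent with that.

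The genuine gap is in your first inequality. You assert that ``ergodicity (from (P2))'' and Birkhoff's theorem give $X_t/t\to\E[b\Phi]$ and the a.s.\ convergence of the running cost to $\E[L(-b,\cdot)\Phi]$. But (P2) is ergodicity of the translation group $\{\tau_{(x,t)}\}$, not of the environment Markov process $\om(t)=\tau_{(X_t,-t)}\om$ under the invariant measure $\Phi\,d\bP$. These are different statements, and the second one is exactly what fails here: for a degenerate diffusion matrix $A$ the invariant measure associated with a given drift need not be unique, so the environment process need not be ergodic, and the Birkhoff limits are a priori random (conditional expectations on the invariant $\sigma$-field). The paper explicitly identifies this as the obstruction to running the \cite{KRV,KV} machinery in the degenerate setting. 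For the one-sided bound you want, the argument is salvageable --- apply Theorem \ref{t.L.homog} along the (random) limit velocity $v(\om)$, use $\ol{H}(p)\ge p\cdot v(\om)-\ol{L}(v(\om))$ pointwise, and only then take expectations --- but as written the step is not justified. A secondary issue is your use of the stochastic control representation of the fundamental solution $\cL$ itself: enforcing the singular vertex condition for a controlled diffusion is delicate, and it is cleaner to run the verification against the Cauchy problem or the approximate corrector $w^\ep$. The paper sidesteps both difficulties by taking the purely PDE route of Theorem B of \cite{LS3}: test the corrector equation against the density $\Phi$, use convexity of $H$ in the form $H(p+Dw^\ep,\cdot)\ge\langle-b,p+Dw^\ep\rangle-L(-b,\cdot)$, and let the defining relation \eqref{e.invmeas} of $\cE$ annihilate all the derivative terms; this yields $\widetilde{H}\le\ol{H}$ with no SDE and no ergodicity of the environment process.
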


We only sketch the proof. Given the homogenization result, Theorem B of \cite{LS3} provides a method to establish $\widetilde{H} \le \ol{H}$, which is easily applied here. Note that even though \cite{LS3} concerned only time homogeneous environment, the proof of Theorem B there does not rely on this fact.

The inequality $\widetilde{H}(p) \ge \ol{H}(p)$ follows from the fact that, for any $\del > 0$, there exists $\psi_\del \in \cS$, such that $\psi_\del$ is a subsolution to
\begin{equation*}
\begin{aligned}
 \partial_t \psi_\del - \tr(A(x,t,\omega)D^2 \psi_\delta) + H(p+D\psi_\delta,x,t,\omega) \le \widetilde{H}(p) + \delta \quad \text{ on } \,  \R^{d+1}.
 \end{aligned}
\end{equation*}
This claim is proved in \cite{KV} for $A \equiv Id$, but the proof, which utilizes the min-max theorem, extends easily to general diffusion matrix $A \in C^{1,\alpha}$. We emphasize that neither $A = Id$ nor $A$ being uniformly elliptic is needed for this claim. 

It is difficult to prove the homogenization result of this paper using the method of \cite{KRV,KV}. Indeed, in these references, the uniform lower bound $\liminf_{\ep \to 0} \inf_{Q_R}(u^\ep - u) \ge 0$ is established using the ergodic theorem, which requires uniqueness of invariant measure for a given drift. For this, the uniform ellipticity of $A$ is crucial. The stronger assumption that $H$ grows superquadratically in $p$ does not seem to help to remove uniform ellipticity requirement of $A$. In that sense, the fact that \eqref{e.H_KRV} provides the formula for the effective Hamiltonian for possibly degenerate diffusion matrix $A$, though only under the restrictive superquadratic growth assumption, is a new fact.

%%%%%%%%%%%%%%
% Backmatters

\bibliographystyle{acm} %abbrv, acm, alpha, apalike, ieeetr, plain, siam and unsrt. 
\bibliography{jst2}

\section*{Acknowledgements}
WJ is supported in part by the NSF grant DMS-1515150. PS is supported in part by the NSF grant DMS-1266383 and DMS-1600129. HT is supported in part by the NSF grant DMS-1361236.

\bigskip

\end{document}